\def\le{\leqslant}
\def\leq{\leqslant}
\def\ge{\geqslant}
\def\geq{\geqslant}
\newcommand\mycom[2]{\genfrac{}{}{0pt}{}{#1}{#2}}
\definecolor{light}{rgb}{.9,.9,.9}
\definecolor{darkgreen}{rgb}{0,.6,0}
\numberwithin{equation}{section}
\newcommand{\R}{\mathbb R}
\newcommand{\N}{\mathbb N}
\newcommand{\Z}{\mathbb Z}
\newcommand{\Q}{\mathbb Q}
\newtheoremstyle{plain}
  {10pt}
  {10pt}
  {\it}
  {0pt}
  {\bf}
  {}
  {3mm}
  {}
\newtheoremstyle{definition}
  {10pt}
  {10pt}
  {}
  {0pt}
  {\bf}
  {}
  {3mm}
  {}
\theoremstyle{plain}
\definecolor{MyDarkBlue}{rgb}{0,0.29,0.7}
\theoremstyle{plain}
\newtheorem{theorem}{Theorem}
\newtheorem{coro}[theorem]{Corollary}
\newtheorem{prop}[theorem]{Proposition}
\theoremstyle{definition}
\newtheorem{definition}[theorem]{Definition}
\newtheorem{remark}[theorem]{Remark}
\begin{document}

\title{Poncelet Curves}
\author[1]{Norbert Hungerb\"uhler}
\author[2]{Micha Wasem}
\affil[1]{Department of Mathematics, ETH Z\"urich, R\"amistrasse 101, 8092 Z\"urich, Switzerland,
norbert.hungerbuehler@math.ethz.ch\bigskip}
\affil[2]{School of Engineering and Architecture, HES-SO University of Applied Sciences and Arts Western Switzerland, P\'erolles 80, 1700 Freiburg, Switzerland,
micha.wasem@hefr.ch\vspace*{-9mm}}

\date{}
\maketitle
\begin{abstract}
\noindent We examine pairs of closed plane curves that have the same closing property as two conic sections in Poncelet's porism. 
We show how the vertex curve can be computed for a given envelope and vice versa. 
Our formulas are universal in the sense that they produce all possible sufficiently regular pairs of such Poncelet curves. 
We arrive at similar results for sets of curves, analogous to the pencil of conic sections in the full Poncelet  theorem.
We also study the case of Poncelet curves that carry  Poncelet polygons which are equiangular or even congruent.
\end{abstract}

{\bf Key words.} Poncelet porism, Poncelet curves, torsion maps, Wankel engine\\
{\bf Mathematics Subject Classification.} 14H50, 53A04, 37E45

\section{Introduction}
A popular version of  Poncelet's porism reads as follows:
\begin{theorem}\label{thm-poncelet}
Let $K$ and $C$ be nondegenerate conics in general position
which neither meet nor intersect. Suppose there is an $n$-sided polygon inscribed in $K$ and circumscribed about $C$.
Then for any point $P$ of $K$, there exists an $n$-sided polygon, also 
inscribed in $K$ and circumscribed about $C$, which has $P$ as one of its
vertices.
\end{theorem}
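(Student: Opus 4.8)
The plan is to convert the geometric operation ``advance from one vertex of the polygon to the next'' into translation by a fixed element of an elliptic curve, so that the condition for an $n$-gon to close becomes a torsion condition that cannot see the chosen starting vertex. (Poncelet's own continuity argument, or a direct proof that the vertex map of $K$ is conjugate to a rotation, would also work, but the elliptic-curve proof is the cleanest and is closest in spirit to the torsion maps used later in this paper.)

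First I would introduce the incidence curve
\[
E=\bigl\{(p,q)\in K\times C : p\in T_qC\bigr\},
\]
where $T_qC$ is the tangent line to $C$ at $q$. Rational parametrisations identify $K\cong\mathbb{P}^1$ and $C\cong\mathbb{P}^1$, and in these coordinates the incidence relation is a curve of bidegree $(2,2)$ in $\mathbb{P}^1\times\mathbb{P}^1$: from a point $p\in K$ there are two tangents to $C$, and the tangent to $C$ at a point $q$ meets $K$ in two points. The assumption that $K$ and $C$ lie in general position --- and in particular are nowhere tangent --- is precisely what makes $E$ nonsingular, and by adjunction a smooth curve of bidegree $(2,2)$ in $\mathbb{P}^1\times\mathbb{P}^1$ has genus $(2-1)(2-1)=1$; after choosing an origin, $E$ is an elliptic curve.

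Next I would exploit the two natural involutions of $E$: let $\iota_K$ fix $q$ and interchange the two points of $T_qC\cap K$, and let $\iota_C$ fix $p$ and interchange the two contact points of the tangents drawn from $p$ to $C$. Each of these involutions has fixed points --- respectively the common tangents of $K$ and $C$, and the (complex) intersection points of $K$ and $C$ --- so each is of the form $x\mapsto c-x$ in the group law of $E$ for a suitable constant. Therefore the Poncelet step $\tau:=\iota_C\circ\iota_K$, which carries a flag $(p_i,q_i)$ --- a vertex $p_i$ together with the edge $T_{q_i}C$ leaving it --- to the next flag $(p_{i+1},q_{i+1})$ along the polygon, is a translation $x\mapsto x+d$ on $E$ by a fixed element $d\in E$.

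Finally, an $n$-gon inscribed in $K$ and circumscribed about $C$ passing through a given flag closes precisely when $\tau^{\,n}$ fixes that flag, i.e. when $nd=0$ in $E$ --- a condition on the element $d$ alone, independent of the starting flag. Hence the existence of a single such $n$-gon forces $nd=0$, and then, starting from one of the two flags of $E$ lying over an arbitrary point $P\in K$, the construction returns to its initial flag after $n$ edges and produces the desired polygon having $P$ as a vertex (and, since $d$ is the same for every starting flag, of the same size as the given one). The step I expect to be the main obstacle is not the algebra but the passage back to the real statement: one has to verify that under the stated hypotheses the real locus $E(\R)$ is a single topological circle on which $\tau$ acts as a genuine rotation, so that the closing behaviour is uniform along the whole curve, and one must rule out the degenerate configurations --- a repeated tangent from a point of $K$, or a real point of $K\cap C$ --- which the general-position hypothesis exists to exclude.
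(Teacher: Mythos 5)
The paper does not prove Theorem~\ref{thm-poncelet} at all: it is quoted as a classical result and the reader is referred to \cite{hhponcelet} for an elementary proof and to \cite{vladimir,flatto} for general background. So there is no internal argument to compare yours against; what you have written is the standard Griffiths--Harris proof via the incidence elliptic curve, and in outline it is correct. The bidegree-$(2,2)$ count, the genus computation, the identification of the fixed loci of $\iota_K$ and $\iota_C$ (common tangents, respectively complex points of $K\cap C$), and the conclusion that $\tau=\iota_C\circ\iota_K$ is translation by a fixed $d$ are all right, and the closure criterion $nd=0$ is indeed independent of the starting flag. This is a genuinely different route from the elementary projective argument of \cite{hhponcelet}; what it buys is exactly the structural fact the rest of this paper axiomatizes, namely that the vertex-advance map is conjugate to a rigid rotation (compare Proposition~\ref{rotationnumberprop} and the torsion maps of Section~\ref{sec-3.1}), so your choice of proof is well matched to the paper's point of view.

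One remark on the step you single out as the main obstacle: it is less of an obstacle than you suggest, and worrying about the connectivity of $E(\R)$ is the wrong place to look. You deduce $nd=0$ from the existence of a single closed real polygon; once $nd=0$ holds, $\tau^n$ is the identity \emph{morphism} on all of $E$, hence on every real point regardless of how many components $E(\R)$ has. What does need checking on the real side is only that the construction never leaves the real locus: every point of $K$ must be exterior to $C$ so that the two tangents from it are real (this follows from the hypotheses, since $K$ and $C$ are disjoint and at least one point of $K$ --- a vertex of the given polygon --- is exterior), and the second intersection of a real tangent line with $K$ is automatically real because the first one is. With that substitution your sketch closes up into a complete proof.
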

See, e.g.,~\cite{hhponcelet} for an elementary proof of the theorem and~\cite{vladimir,flatto} as general references. In this article we investigate 
pairs of curves which have the same closing property as the conics $K$ and $C$ in Theorem~\ref{thm-poncelet}:
\begin{definition}\label{def-pairs}
Let $K$ and $C$ be closed curves in the Euclidean plane $\mathbb R^2$. 
If every point $Q\in K$ is a vertex of an $n$-gon $P$ inscribed in $K$ and circumscribed about
$C$, then $(K,C)$ is called a {\em Poncelet pair}, and $P$ a {\em Poncelet polygon}. 
The curve $K$ is called {\em vertex curve}, the curve $C$ is called {\em envelope}.
Here, circumscribed also allows that the prolongation of the sides of $P$ are tangential to $C$ (see the example in Figure~\ref{figure-no-touch}).
\end{definition}
There is also the following, lesser known, full form of Poncelet's theorem (see~\cite[Theorem~5.2]{vladimir} and  Figure~\ref{fig-full-dual} on the left):
\begin{theorem}\label{thm-full}
Let the conics $K, C_1,\ldots , C_n$ belong to a pencil. If a polygon $P$
inscribed in $K$ exists such that each of its sides is tangent to one of the conics $C_1,\ldots , C_n$, 
then infinitely many such polygons exist which touch the conics $C_1,\ldots , C_n$ 
in the same order as $P$.
\end{theorem}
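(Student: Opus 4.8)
\noindent\emph{Proof strategy.} I would pass to complex projective conics, attach to the pencil an elliptic curve on which the Poncelet step becomes a translation, and read the closing property off as the vanishing of a single fixed element of that curve — a condition that cannot depend on the particular polygon. Concretely: realise $K,C_1,\dots,C_n$ as conics in $\mathbb{CP}^2$ and write the pencil as $\mathcal P=\{Q_0+tK\}_{t\in\mathbb P^1}$, so that $K$ is the member at $t=\infty$ and $C_i$ sits at a parameter $t_i$; the hypothesis that a polygon $P$ exists forces the members it meets to be smooth, so the $t_i$ and $t=\infty$ avoid the finitely many parameters of the degenerate members. The remaining degenerate configurations, and the passage back from $\mathbb{CP}^2$ to the situation of Definition~\ref{def-pairs} over $\mathbb R^2$ (prolongations of sides allowed), I would handle at the very end by continuity, using that a single complex one-parameter family of inscribed polygons yields a real one near $P$.

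The key object is the \emph{elliptic curve of the pencil}. Since any two members of $\mathcal P$ meet exactly in the four base points of $\mathcal P$, the double cover $\pi\colon E\to K$ of $K\cong\mathbb P^1$ ramified over those four points does not depend on the member chosen; $E$ has genus one, and I fix a group law on it. For each smooth $C\in\mathcal P\setminus\{K\}$ there is a canonical isomorphism $E\cong E_{K,C}:=\{(x,\ell):x\in K\cap\ell,\ \ell\ \text{tangent to}\ C\}$ over $K$ (both are the unique double cover of $K$ ramified over $K\cap C=\{\text{base points}\}$). What I would then import — this is the elliptic-curve mechanism behind Poncelet's porism, underlying Theorem~\ref{thm-poncelet}; see~\cite{hhponcelet,vladimir,flatto} and the description of Poncelet correspondences by translations — is the following. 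On $E_{K,C}\cong E$ the deck involution $J$ of $\pi$ (which swaps the two tangents to $C$ through $x$, hence is \emph{the same for every} $C$) and the deck involution $\iota_C$ of the second $2{:}1$ map $E\to\mathbb P^1$ recording the tangent line (which swaps the two points that line cuts on $K$) both have fixed points, so $J(Z)=\beta-Z$ and $\iota_C(Z)=\alpha_C-Z$ with $\beta$ depending only on $K$ and $\alpha_C$ only on $C$; consequently advancing one side of a Poncelet polygon along a tangent to $C$ is realised by the translation $J\circ\iota_C\colon Z\mapsto Z+\tau_C$, $\tau_C:=\beta-\alpha_C$. The decisive feature is that $\beta$ is common to all members of $\mathcal P$, so stepping along tangents to $C_1,\dots,C_n$ in turn is translation by the single element $\tau_{C_1}+\dots+\tau_{C_n}=n\beta-(\alpha_{C_1}+\dots+\alpha_{C_n})\in E$.

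The conclusion is then a lifting argument. Write $P=p_0p_1\cdots p_{n-1}$ with $p_n=p_0$, its $i$-th side a tangent to $C_i$ joining $p_{i-1}$ and $p_i$; pick the lift $\tilde p_0\in\pi^{-1}(p_0)$ corresponding, in $E\cong E_{K,C_1}$, to that side, and following $P$ set $\tilde p_i=\tilde p_{i-1}+\tau_{C_i}$, so $\pi(\tilde p_i)=p_i$ and $\tilde p_n=\tilde p_0+\sum_{i=1}^n\tau_{C_i}$. The vertices of $P$ close, so $\pi(\tilde p_n)=\pi(\tilde p_0)$; checking that the lift returns to $\tilde p_0$ itself and not to $J(\tilde p_0)$ — which one sees by carrying the tangent lines once around $P$ — gives $\sum_{i=1}^n\tau_{C_i}=0$ in $E$, an identity that no longer mentions $P$. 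Hence for an arbitrary $q_0\in K$, lifting to $\tilde q_0$ and putting $\tilde q_i=\tilde q_{i-1}+\tau_{C_i}$ yields $\tilde q_n=\tilde q_0$, so $q_n:=\pi(\tilde q_n)=q_0$ and the lines $\pi(\tilde q_{i-1})\pi(\tilde q_i)$ — each tangent to the corresponding $C_i$ by construction of $\tau_{C_i}$ — close into a polygon inscribed in $K$ touching $C_1,\dots,C_n$ in the same cyclic order as $P$; letting $q_0$ run over $K$ gives infinitely many. The main obstacle, and the reason this is more than a one-line deduction from Theorem~\ref{thm-poncelet}, is to make the structural input of the middle paragraph rigorous in the multi-conic setting: one must set up a single elliptic curve with one $2{:}1$ projection to $K$ linearising the Poncelet steps of \emph{all} members of the pencil at once, with the additive law $\tau_C=\beta-\alpha_C$ holding on the nose, and then keep the signs and sheet choices under control well enough to guarantee that vertices closing up forces the lift to close up (instead of jumping by $J$); granting this, the degenerate and real cases follow by specialisation.
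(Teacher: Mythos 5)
The paper does not prove this statement: Theorem~\ref{thm-full} is quoted from the literature (it is \cite[Theorem~5.2]{vladimir}) and is used only as motivation for Definition~\ref{def-clan}, so there is no in-paper proof to compare yours against. Your route — the elliptic curve $E$ that is the common double cover of $K$ branched over the four base points, with each Poncelet step realised as a translation $\tau_{C_i}$ and the closure condition read off as $\sum_i\tau_{C_i}=0$ — is indeed the standard mechanism behind the full theorem, and the skeleton (common $\beta$ for all members of the pencil, hence a single fixed element of $E$ whose vanishing is independent of the polygon) is right.

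However, the step you flag and then wave through — ``checking that the lift returns to $\tilde p_0$ itself and not to $J(\tilde p_0)$, which one sees by carrying the tangent lines once around $P$'' — is not a routine verification; it is exactly where the argument can fail, and for the hypothesis as literally stated it \emph{does} fail. At each vertex $p_i$ there are two admissible tangents to $C_{i+1}$, and on $E$ they send a lift $Z$ of $p_i$ to $Z+\tau_{C_{i+1}}$ and $Z-\tau_{C_{i+1}}$ respectively; a closed polygon therefore only gives $\pi\bigl(Z_0+\sum_i\varepsilon_i\tau_{C_i}\bigr)=\pi(Z_0)$ for some signs $\varepsilon_i$, i.e.\ either $\sum_i\varepsilon_i\tau_{C_i}=0$ (your conclusion) or $2Z_0=\beta-\sum_i\varepsilon_i\tau_{C_i}$, which has four solutions and produces genuine ``sporadic'' closed polygons belonging to no one-parameter family. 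In the classical single-conic case these sporadic solutions are automatically degenerate (two vertices collide), which is why the issue is invisible there; with several distinct conics they need not be. Concretely: let $(K,C)$ carry Poncelet triangles, let $C_3$ be a further member of the pencil spanned by $K$ and $C$, and let $\ell$ be a common tangent of $C$ and $C_3$ meeting $K$ in two real points. The Poncelet triangle of $(K,C)$ having $\ell$ as a side is inscribed in $K$ with sides tangent to $C,C,C_3$ in that order, yet only finitely many such triangles exist unless $C_3$ is special, since the third side of any such triangle must be a common tangent of $C$ and $C_3$. So from the bare existence of $P$ you cannot conclude $\sum_i\tau_{C_i}=0$; your argument proves the theorem only for those $P$ whose lift to $E$ actually closes (equivalently, whose associated sign pattern is of translation type), and completing the proof requires either establishing that hypothesis for the polygons the theorem is meant to cover or sharpening the statement — a caveat that applies to the cited formulation as well.
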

The dual form of this theorem is usually not mentioned in the literature. The reason
being that the dual of the pencil generated by two conics is {\em not\/} the pencil generated by the duals of
the two conics (see~\cite{hh}). Therefore the dual of Theorem~\ref{thm-full} is slightly less elegant compared to its original (see Figure~\ref{fig-full-dual} on the right):
\begin{theorem}\label{thm-dual}
Let $C, K_1,\ldots , K_n$ be conics such that their duals belong to a pencil.
If a polygon $P$  circumscribed about $C$ exists such that each of its vertices belongs to one of the conics $K_1,\ldots , K_n$, 
then infinitely many such polygons exist which have vertices on the conics $K_1,\ldots , K_n$ 
in the same order as $P$.
\end{theorem}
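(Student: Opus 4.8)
The plan is to deduce Theorem~\ref{thm-dual} from Theorem~\ref{thm-full} by projective duality, with no extra geometric input. First I would fix the canonical duality between $\mathbb{RP}^2$ and its dual plane $(\mathbb{RP}^2)^{*}$, under which points of $\mathbb{RP}^2$ correspond to lines of $(\mathbb{RP}^2)^{*}$ and lines of $\mathbb{RP}^2$ correspond to points of $(\mathbb{RP}^2)^{*}$, incidence being reversed. To a nondegenerate conic $Q$ (given by a symmetric matrix $A$) this duality associates the dual conic $Q^{*}$ (given by $A^{-1}$ up to scaling), which is the set of tangent lines of $Q$; its defining property is that a line $\ell$ is tangent to $Q$ if and only if the point $\ell$ lies on $Q^{*}$, and dually a point $X$ lies on $Q$ if and only if the line $X$ is tangent to $Q^{*}$. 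I would then record the elementary fact that an $n$-gon $P$ with vertices $V_1,\dots,V_n$ and sides $\ell_1,\dots,\ell_n$ dualizes to an $n$-gon $P^{*}$ in $(\mathbb{RP}^2)^{*}$ whose vertices are the points $\ell_1,\dots,\ell_n$ and whose sides are the lines $V_1,\dots,V_n$, so that the duality is a bijection on $n$-gons preserving the cyclic adjacency structure.

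Next I would translate the hypotheses of Theorem~\ref{thm-dual} into the dual plane. By assumption the dual conics $C^{*},K_1^{*},\dots,K_n^{*}$ belong to a pencil, and a pencil of conics in $(\mathbb{RP}^2)^{*}$ is exactly the kind of object Theorem~\ref{thm-full} requires; here I would remark, in line with the comment preceding the statement, that this is precisely why the pencil condition must be imposed on the duals directly and cannot be replaced by ``$C,K_1,\dots,K_n$ belong to a pencil'', since the image of a pencil (a line in $\mathbb{P}^5$) under the quadratic map $A\mapsto A^{-1}$ is not a pencil in general. Since $P$ is circumscribed about $C$, each side $\ell_i$ is tangent to $C$, hence each vertex $\ell_i$ of $P^{*}$ lies on $C^{*}$; thus $P^{*}$ is inscribed in $C^{*}$. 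Since each vertex $V_i$ of $P$ lies on some $K_{j(i)}$, the corresponding side $V_i$ of $P^{*}$ is tangent to $K_{j(i)}^{*}$; thus each side of $P^{*}$ is tangent to one of $K_1^{*},\dots,K_n^{*}$, in a fixed cyclic order.

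Now I would apply Theorem~\ref{thm-full} in $(\mathbb{RP}^2)^{*}$ with $K=C^{*}$ and $C_i=K_i^{*}$: it produces infinitely many $n$-gons inscribed in $C^{*}$ whose sides are tangent to $K_1^{*},\dots,K_n^{*}$ in the same order as $P^{*}$. Dualizing each of these back via the bijection above yields infinitely many distinct $n$-gons circumscribed about $C$ whose vertices lie on $K_1,\dots,K_n$ in the same cyclic order as $P$, which is exactly the assertion of Theorem~\ref{thm-dual}.

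The only real work is bookkeeping, and that is also where the main obstacle lies: one must check carefully that the duality exchanges ``inscribed'' with ``circumscribed'' and matches the ordering of the tangency and incidence constraints exactly, and one must deal with possible degeneracies — vertices or sides of $P$ at infinity, or degenerate conics among the $K_i$ or $C$, whose duals are not conics in the naive sense. I would handle the first by working throughout in $\mathbb{RP}^2$, where points and lines play fully symmetric roles, so that ``in the same order'' transports without ambiguity; I would handle the second by the standard observation that the statement is projective and the exceptional configurations form a nowhere dense set, so that a generic projective change of coordinates reduces to the regular case already implicit in Theorem~\ref{thm-full} (alternatively one simply restricts the statement to nondegenerate conics in general position, which is the case of interest here).
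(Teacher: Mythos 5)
Your proof is correct and follows exactly the route the paper intends: the paper states Theorem~\ref{thm-dual} as the dual form of Theorem~\ref{thm-full} and gives no written proof, and your argument supplies precisely the omitted dualization details (transport of the pencil hypothesis to the dual plane, the exchange of inscribed/circumscribed with tangency/incidence, and preservation of the cyclic order). Your observation on why the pencil condition must be imposed on the duals, rather than on $C,K_1,\ldots,K_n$ themselves, matches the paper's own remark that the dual of a pencil is not the pencil of the duals.
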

\begin{figure}[H]
\begin{center}
\raisebox{3mm}{\begin{tikzpicture}[line cap=round,line join=round,x=33,y=33]
\draw [red,rotate around={0.:(0.,0.)},line width=.8pt] (0.,0.) ellipse (3.1622776601683795 and 2.23606797749979);
\draw [rotate around={-31.71747441146101:(-0.6,0.2)},line width=.8pt] (-0.6,0.2) ellipse (1.6139964287134976 and 0.9975046506658882);
\draw [rotate around={-26.5650511770:(-0.44444444444,0.11111111111)},line width=.8pt] (-0.44444444444,0.11111111111) ellipse (2.0548046676563256 and 1.369869778437);
\draw [rotate around={-35.2862195687:(-0.774839319441,0.321405340279)},line width=.8pt] (-0.774839319441,0.321405340279) ellipse (0.911408391306 and 0.51552028438);

\draw [line width=.5pt,fill=black,fill opacity=.05] (-0.5756248322185297,2.198710537170979)-- (2.031697396051639,-1.7135060097468278)-- (-2.8187102352174196,-1.0136253276927722)--cycle;

\draw [line width=.5pt,dotted] (-2.5249797111144634,1.3461941647586206)-- (3.162070592610102,-0.025588741180447527);
\draw [line width=.5pt,dotted] (3.162070592610102,-0.025588741180447527)-- (-2.0045850975031207,-1.72939853516597);
\draw [line width=.5pt,dotted] (-2.0045850975031207,-1.72939853516597)-- (-2.5249797111144634,1.3461941647586206);
\begin{footnotesize}
\draw[color=red] (-2.1,1.8636885376243941) node {$K$};
\draw[color=black] (-1,-.5) node {$C_2$};
\draw [fill=white] (-0.5756248322185297,2.198710537170979) circle (1.5pt);
\draw[color=black] (0.6,1.3) node {$C_1$};
\draw [fill=white] (2.031697396051639,-1.7135060097468278) circle (1.5pt);
\draw [fill=white] (-2.8187102352174196,-1.0136253276927722) circle (1.5pt);
\draw[color=black] (-0.3108931379350341,0.4) node {$C_3$};
\draw [fill=white] (0.6386579063635754,0.3767121009940546) circle (1.5pt);
\draw [fill=white] (-0.17407053426699182,-1.3952287677327473) circle (1.5pt);
\draw [fill=white] (-1.5198610395775876,0.8463821634093095) circle (1.5pt);
\draw[color=black] (1.7,-.9) node {$a_2$};
\draw[color=black] (1.1,-1.7) node {$a_1$};
\draw[color=black] (-2.7,-.55) node {$a_3$};
\draw [fill=white] (3.162070592610102,-0.025588741180447527) circle (1.5pt);
\draw [fill=white] (-2.0045850975031207,-1.72939853516597) circle (1.5pt);
\draw [fill=white] (-2.5249797111144634,1.3461941647586206) circle (1.5pt);
\draw [fill=white] (0.3768763357745465,-0.9440631930448131) circle (1.5pt);
\draw [fill=white] (-2.3649462673177126,0.40037789646673083) circle (1.5pt);
\draw [fill=white] (-0.9598038517614331,0.9686197005106243) circle (1.5pt);
\end{footnotesize}
\end{tikzpicture}}
\begin{tikzpicture}[line cap=round,line join=round,x=33,y=33]
\draw [rotate around={-31.71747441146101:(-0.6,0.2)},line width=0.8pt] (-0.6,0.2) ellipse (3.01951082986131 and 1.8661603222526901);
\draw [rotate around={-29.976134849589837:(-0.3046875,0.1015625)},line width=0.8pt] (-0.3046875,0.1015625) ellipse (2.2275748568895235 and 1.4403657247435875);
\draw [rotate around={-25.950895629401764:(-0.1477116362548609,0.04923721208495364)},line width=0.8pt] (-0.1477116362548609,0.04923721208495364) ellipse (1.6850104948905924 and 1.1485527890317309);

\draw [line width=.5pt,fill=black,fill opacity=.05] (-0.14725821581491882,1.2615118845874542)-- (2.1487327374677267,-0.7660680351716186)-- (-1.67034020030658,-0.6622018739703512)--(-1.67034020030658,-0.6622018739703512)-- cycle;
\draw [red,rotate around={0.:(0.,0.)},line width=0.8pt] (0.,0.) ellipse (1. and 0.7071067811865476);

\draw [line width=0.5pt,dotted] (-1.6786876630898804,0.7518708071998076)-- (1.6229802017606323,0.6650927909572744);
\draw [line width=0.5pt,dotted] (1.6229802017606323,0.6650927909572744)-- (-0.004840861896711068,-1.5281486061060283);
\draw [line width=0.5pt,dotted] (-0.004840861896711068,-1.5281486061060283)-- (-1.6786876630898804,0.7518708071998076);
\begin{footnotesize}
\draw[color=red] (-0.45,0.46879961352047683) node {$C$};
\draw [fill=white] (-0.03843351743468511,-0.7065843420065285) circle (1.5pt);
\draw[color=black] (-1.7853484180042154,2.26) node {$K_1$};
\draw [fill=white] (2.1487327374677267,-0.7660680351716186) circle (1.5pt);
\draw[color=black] (2.35,-0.8) node {$A_1$};
\draw[color=black] (-1.3842691136370924,1.56) node {$K_2$};
\draw [fill=white] (-1.67034020030658,-0.6622018739703512) circle (1.5pt);
\draw[color=black] (-1.8311860527890294,-0.7745462300175698) node {$A_2$};
\draw[color=black] (-0.8685957223079346,1.1) node {$K_3$};
\draw [fill=white] (-0.14725821581491882,1.2615118845874542) circle (1.5pt);
\draw[color=black] (-0.2268688353205378,1.45) node {$A_3$};
\draw [fill=white] (-0.8725636534898562,0.3454219670261536) circle (1.5pt);
\draw [fill=white] (0.780556475657425,0.44200202958202556) circle (1.5pt);
\draw [fill=white] (1.6229802017606323,0.6650927909572744) circle (1.5pt);
\draw [fill=white] (-0.004840861896711068,-1.5281486061060283) circle (1.5pt);
\draw [fill=white] (-1.6786876630898804,0.7518708071998076) circle (1.5pt);
\draw [fill=white] (-0.8875387082634054,-0.32578753915253256) circle (1.5pt);
\draw [fill=white] (0.8854657558399458,-0.328595796712917) circle (1.5pt);
\draw [fill=white] (0.03700663479420996,0.7066224271070117) circle (1.5pt);
\end{footnotesize}
\end{tikzpicture}
\caption{Theorem~\ref{thm-full} for triangles with sides $a_1,a_2,a_3$ tangent to $C_1,C_2,C_3$ on the left, 
and Theorem~\ref{thm-dual} for triangles with vertices $A_1,A_2,A_3$ on $K_1,K_2,K_3$ on the right.}\label{fig-full-dual}
\end{center}
\end{figure}
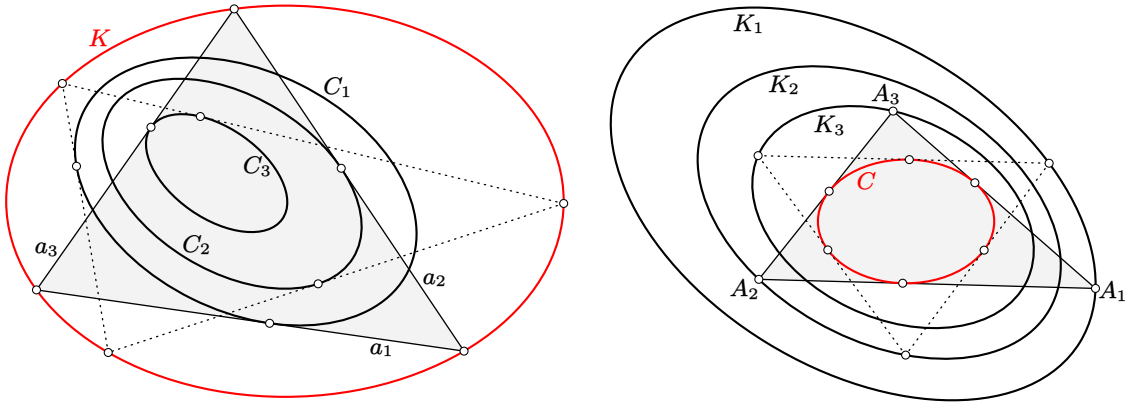
\begin{definition}\label{def-clan}
Generalizing Definition~\ref{def-pairs} we will call curves $(K, C_1,\ldots , C_n)$
which have the closing property described in Theorem~\ref{thm-full} or curves $(C, K_1,\ldots , K_n)$ 
which have the closing property in Theorem~\ref{thm-dual}  a {\em Poncelet clan}. 
\end{definition}

For the case when the vertex curve $K$ is a circle, 
Poncelet pairs $(K,C)$ were first  considered in~\cite{mirman98} and~\cite{gau} in 
the context of numerical ranges of matrices, and later intensively studied in 
\cite{mirman2001,mirman2009,chien,gorkin,gorkin2020},  \cite[\S5 and \S7]{wu-gau} and the survey paper~\cite{wu-gau-survey}. 
Another approach to Poncelet pairs $(K,C)$ where $K$ is again a circle and $C$ an algebraic curve uses Szeg\H{o} polynomials and is described in~\cite{hunziker}.
For a circle $K$, the corresponding envelope $C$ may also be constructed using Blaschke products (see~\cite{blaschke}).
While these methods seem to be limited to the special case where the vertex curve $K$ is a circle, 
we will take a more direct geometric point of view here
and consider methods to generate general Poncelet pairs and clans. 

This article is organized as follows. In Section~\ref{sec-2}, we consider Poncelet pairs and clans with
particularly nice geometric properties. In Section~\ref{sec-3} we will construct Poncelet pairs and clans
when either the vertex curve is prescribed, or when the envelope is prescribed. The formulas we develop are 
universal in the sense that any sufficiently regular Poncelet pair can be generated using the approach we present.

\section{Equiangular Poncelet polygons}\label{sec-2}

In Section~\ref{sec-ep} we will consider Poncelet pairs $(K,C)$ with  Poncelet polygons
whose external angles are all identical. Theorem~\ref{thm-6} describes the general solution for this situation. 
A special case occurs when the Poncelet polygons of $(K,C)$ are not only equiangular, but even equilateral and all congruent. 
This situation is covered in Theorem~\ref{thm-equilateral}. In Section~\ref{sec-ec} we will consider
equiangular Poncelet polygons for Poncelet clans $(C,K_1,K_2,\ldots,K_n)$.

\subsection{Equiangular pairs}\label{sec-ep}
If the external angles of each Poncelet polygon are equal and independent of the
starting point, then we say that the Poncelet pair $(K,C)$ is {\em equiangular}. 
\begin{theorem}\label{thm-6}
If $C$ is a closed $C^2$ curve in the Euclidean plane with non-vanishing curvature \textcolor{black}{and total curvature $2k\pi$, where $0< k\in\N$}, and $0<\alpha<\pi$ is commensurable with $\pi$, 
then there exist $2k$ \textcolor{black}{closed curves $K_i$, $i=0,\ldots,2k-1$ such that $(K_i,C)$ is an equiangular Poncelet pair with
angle $\alpha$.}
\end{theorem}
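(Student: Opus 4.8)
The plan is to exploit the fact that an equiangular Poncelet polygon with external angle $\alpha$ is rigidly determined (up to the choice of tangent lines) by a sequence of tangent directions to $C$. If a side of a Poncelet polygon touches $C$ where the tangent direction is $\theta$, then the next side must touch $C$ where the tangent direction is $\theta+\alpha$ (the external angle rotates the direction of travel by $\alpha$). Parametrize $C$ by the turning angle of its tangent: since $C$ is $C^2$ with non-vanishing curvature and total curvature $2k\pi$, the tangent angle is a strictly monotone $C^1$ function along $C$ that sweeps out an interval of length $2k\pi$, so we may reparametrize and write $C$ as $\gamma\colon \R/2k\pi\Z \to \R^2$ with $\gamma'(\theta)$ pointing in direction $\theta$. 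Write $\alpha = \tfrac{p}{q}\pi$ in lowest terms; I would like $q\alpha = p\pi$ to be a multiple of $2k\pi$ so the polygon closes up, which suggests looking at the orbit of the rotation $\theta\mapsto\theta+\alpha$ on $\R/2k\pi\Z$.

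\textbf{Constructing the vertex curve.} Given a tangent line $\ell_\theta$ to $C$ at the point $\gamma(\theta)$ (the line through $\gamma(\theta)$ with direction $\theta$), and the next tangent line $\ell_{\theta+\alpha}$, their intersection point is a candidate vertex of the polygon. Define, for a fixed offset $i$, the curve $K_i$ as the locus of intersection points $\ell_\theta \cap \ell_{\theta+\alpha}$ as $\theta$ ranges over $\R/2k\pi\Z$ — but one must be careful: a genuine Poncelet polygon of $(K_i,C)$ needs the side through a vertex to be tangent to $C$ on \emph{both} ends, so consecutive vertices are $\ell_{\theta-\alpha}\cap\ell_\theta$ and $\ell_\theta\cap\ell_{\theta+\alpha}$, which both lie on the same curve only if the construction is consistent around the full orbit. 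The closing condition is that $\theta + q\alpha \equiv \theta \pmod{2k\pi}$, i.e. $q\alpha \in 2k\pi\Z$; since $\alpha$ is commensurable with $\pi$ this can be arranged, and the number of sides $n$ of the polygon is determined by the smallest such $q$. The index $i\in\{0,1,\dots,2k-1\}$ will label which "sheet" of the $2k$-fold turning we start on, giving $2k$ distinct curves $K_i$, and I would verify each $(K_i,C)$ is an equiangular Poncelet pair with angle $\alpha$ by checking that the polygon through any starting tangent direction closes and that every point of $K_i$ arises as a vertex.

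\textbf{Writing the formula and checking regularity.} Concretely, the intersection of $\ell_\theta$ and $\ell_{\theta+\alpha}$ can be written explicitly: with $u(\theta) = (\cos\theta,\sin\theta)$, solve $\gamma(\theta) + s\,u(\theta) = \gamma(\theta+\alpha) + t\,u(\theta+\alpha)$, which is a linear system whose solution is nondegenerate precisely when $u(\theta)$ and $u(\theta+\alpha)$ are independent, i.e. when $\alpha \notin \pi\Z$ — guaranteed by $0 < \alpha < \pi$. This yields an explicit $C^1$ formula for $K_i(\theta)$, and one reads off that $K_i$ is a closed curve (periodicity follows from $q\alpha \in 2k\pi\Z$ and periodicity of $\gamma$). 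That $(K_i,C)$ is then a Poncelet pair is essentially built into the construction: starting at any point $Q = K_i(\theta_0)$, the polygon with vertices $K_i(\theta_0), K_i(\theta_0+\alpha), K_i(\theta_0+2\alpha),\dots$ has each side tangent to $C$ by definition, external angle $\alpha$ at each vertex, and closes after $n$ steps.

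\textbf{Main obstacle.} The delicate point is establishing that the naive locus of intersection points genuinely supports a \emph{closed} polygon whose sides are tangent to $C$ on both ends simultaneously, and that there really are $2k$ distinct curves rather than fewer — in other words, controlling how the orbit of $\theta\mapsto\theta+\alpha$ interacts with the $2k\pi$ period and ensuring the resulting $K_i$ are nondegenerate (e.g. not reducing to a point or developing cusps that destroy the "every point is a vertex" property). Handling the possibility that the tangent lines $\ell_\theta$ and $\ell_{\theta+\alpha}$ coincide or that the polygon is non-convex or self-intersecting (which is allowed, per Definition~\ref{def-pairs} and Figure~\ref{figure-no-touch}, but still needs the closing property verified) is where the care lies; the algebra of the linear system and the periodicity bookkeeping are routine once the right parametrization by turning angle is fixed.
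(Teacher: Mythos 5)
Your core construction is the paper's: parametrize $C$ by the turning/normal angle over $[0,2k\pi)$ (the paper uses the support function $p$, writing $X(\varphi)=p(\varphi)u(\varphi)+p'(\varphi)u'(\varphi)$), take the vertex at the intersection of the tangent lines at parameters $\varphi$ and $\varphi+\alpha$, and solve the resulting linear system, which is nondegenerate because $\alpha\notin\pi\Z$. That linear system is exactly the paper's equation \eqref{eq-euler}, and its solution gives the explicit parametrization \eqref{eq-K} of the vertex curve; closure of the polygon follows from $m\alpha\in 2k\pi\Z$ for a minimal $m$, as you say. Up to this point the two arguments coincide.

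There is, however, a genuine gap in how you produce the $2k$ distinct curves $K_i$. You attribute them to ``which sheet of the $2k$-fold turning we start on,'' i.e.\ to the choice of starting point of the orbit of $\theta\mapsto\theta+\alpha$ on $\R/2k\pi\Z$. But changing the starting point (including shifting it by $2\pi j$ to another sheet) only moves you to a different point of the \emph{same} locus $\{\ell_\theta\cap\ell_{\theta+\alpha}:\theta\in\R/2k\pi\Z\}$, which is a single closed curve; this yields only one vertex curve, the paper's $K_0$. The actual source of the multiplicity is different: since $C$ has total curvature $2k\pi$, for a given tangent line $\ell_\theta$ there are $2k$ tangent lines of $C$ making the (undirected) angle $\alpha$ with it, namely those at parameters $\theta+\alpha+i\pi$, $i=0,\ldots,2k-1$. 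Consistently choosing the offset $i$ — equivalently, running the same construction with the step $\alpha$ replaced by $\alpha_i=\alpha+i\pi$ — produces the $2k$ genuinely distinct vertex curves $K_i$ of \eqref{eq-Ki}. (That these still give external angle $\alpha$ uses the convention of Definition~\ref{def-pairs} that prolongations of sides may be the tangent lines.) Without this step your argument proves existence of one equiangular Poncelet pair, not $2k$ of them.
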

\begin{proof}
We can assume that $C$ is parametrized by 
\begin{equation}\label{eq-C}
C:\  [0,2k\pi)\to \mathbb R^2,\quad \varphi\mapsto X(\varphi)=p(\varphi)u(\varphi)+p'(\varphi)u'(\varphi)
\end{equation}
for a $C^2$ support function $p:[0,2k\pi)\to \mathbb R$ as indicated in Figure~\ref{fig-support}. 
Here $2k\pi$ is the total curvature of $C$, $u(\varphi)=(\cos(\varphi),\sin(\varphi))$, and $\rho(\varphi) = p(\varphi) + p''(\varphi)>0$ is the radius of curvature of $C$ in 
the point $X(\varphi)$ (see, e.g., \cite{ahw}).

\begin{figure}[h!]
\begin{center}
\definecolor{blue}{rgb}{0.,0.,1.}
\definecolor{red}{rgb}{1.,0.,0.}
\definecolor{ccwwqq}{rgb}{0.8,0.4,0.}
\begin{tikzpicture}[line cap=round,line join=round,x=0.9cm,y=0.9cm]
\clip(-4.16,-3.01) rectangle (5.74,3.);
\draw [->,shorten >=1pt,shift={(1.95914692164,-1.64394227432)}] (0,0) -- (0.:0.9) arc (0.:55.26533603178533:0.9);
\draw [shift={(3.825687642976,1.04820339101)}] (0,0) -- (-124.73466396821468:0.35) arc (-124.73466396821468:-34.73466396821468:0.35);
\draw[line width=.5pt,color=ccwwqq,smooth,samples=100,domain=0.0:6.283185307179586] plot ({3.8*cos(deg(\x))-0.3*sin(3.0*deg(\x))},{2.8*sin(deg(\x))+0.2*cos(2.0*deg(\x))});
\draw [domain=-4.16:5.74] plot(\x,{(-7.786461130023362--1.4588134273615847*\x)/-2.1040731660110015});
\draw [line width=.8pt,domain=1.9591469216489055:5.740000000000002] plot(\x,{(-5.360654225861506-0.*\x)/3.2608530783510945});
\fill(3.86,0.84) circle (0.03);
\draw [line width=1pt,color=red,xshift=-.1,yshift=.1] (3.8256876429761633,1.0482033910159714)-- (1.9591469216489055,-1.6439422743241814);
\draw [color=red,decoration={
        brace,
        mirror,
        raise=0.15cm,pre=moveto, pre length=2pt, post=moveto,post length=2pt
    },
    decorate] (3.8256876429761633,1.0482033910159714)-- (1.9591469216489055,-1.6439422743241814) node [pos=0.5,anchor=south,xshift=-13.5pt,yshift=0.pt] {\begin{footnotesize}$p(\varphi)$\end{footnotesize}};

\draw [line width=1pt,color=blue] (3.8256876429761633,1.0482033910159714)-- (2.3485560108715426,2.072340523556868);
    
    \draw [color=blue,decoration={
        brace,
        mirror,
        raise=0.15cm
    },
    decorate] (3.8256876429761633,1.0482033910159714)-- (2.3485560108715426,2.072340523556868) node [pos=0.5,anchor=south,xshift=15pt,yshift=3pt] {\begin{footnotesize}$p'(\varphi)$\end{footnotesize}};

\draw [-{>[scale=1.1]},color=darkgreen,line width=.8pt] (1.9591469216489055+.018,-1.6439422743241814-.018) -- (2.891009427511566+.018,-0.29990001626549634-.018);
\draw [-{>[scale=1.1]},color=orange,line width=.8pt] (1.9591469216489055,-1.6439422743241814) -- (.615,-.712);
\begin{footnotesize}
\draw [fill=black] (2.3485560108715426,2.072340523556868) circle (1.0pt);
\draw[color=black] (2.4,2.15) node[anchor=north east] {$X$};
\draw[color=black] (4.,-1.89) node[anchor=west] {horizontal};
\draw[color=black] (4.,-2.3) node[anchor=west] {axis};

\draw [fill=black,xshift=-.16] (3.8256876429761633,1.0482033910159714) circle (1pt);
\draw [fill=black] (1.9591469216489055,-1.6439422743241814) circle (1.0pt);
\draw[color=black] (1.7,-1.79) node {$S$};
\draw[color=black] (2.5,-1.4) node {$\varphi$};
\draw[color=black] (3.4,-0.51) node {$\textcolor{darkgreen}{u(\varphi)}$};
\draw (.515,-1.1) node {$\textcolor{orange}{u'(\varphi)}$};
\draw (-3,-2) node[xshift=5pt] {$\textcolor{ccwwqq}{C}$};
\end{footnotesize}
\end{tikzpicture}
\caption{The support function $p$  of the curve $C$.}\label{fig-support}
\end{center}
\end{figure}
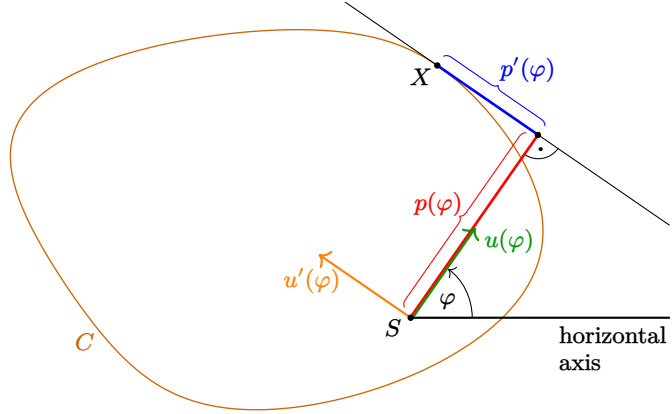

The condition that the circumscribed Poncelet polygon has external angle $\alpha$
translates into the equation
\begin{equation}\label{eq-euler}
p(\varphi) u(\varphi) + 
 q(\varphi) u'(\varphi) =p(\varphi+\alpha) u(\varphi+\alpha) + 
   s(\varphi) u'(\varphi+\alpha)
\end{equation}
for some function $s$ (see Figure~\ref{fig-2}). By multiplying~(\ref{eq-euler}) by $u(\varphi+\alpha)$,
we get 
\begin{equation}\label{eq-q}
q(\varphi)=\csc(\alpha)\bigl( p(\varphi + \alpha)  -\cos(\alpha) p(\varphi)\bigr),
\end{equation}
and hence
\begin{equation}\label{eq-K}
K:\  [0,2k\pi)\to \mathbb R^2,\quad \varphi\mapsto Y(\varphi)=
\csc(\alpha) \bigl(p(\varphi + \alpha) u'(\varphi) - p(\varphi) u'(\varphi + \alpha)\bigr)
\end{equation}
is an explicit $C^2$ parametrization of a possible vertex curve $K$.
\textcolor{black}{Note that \eqref{eq-K} also yields a vertex curve such that the Poncelet polygon has external angle $\alpha$, if it is applied to the angle $\alpha_i = \alpha + i\pi$, $i=0,\ldots,2k-1$. In this way, we obtain $2k$ Poncelet pairs $(K_i,C)$, where
\begin{equation}\label{eq-Ki}
K_i:\  [0,2k\pi)\to \mathbb R^2,\quad \varphi\mapsto Y_i(\varphi)=
\csc(\alpha_i) \bigl(p(\varphi + \alpha_i) u'(\varphi) - p(\varphi) u'(\varphi + \alpha_i)\bigr).
\end{equation}
See Figure~\ref{fig-2k} for an example with $k=1, \alpha=2\pi/3$.
}

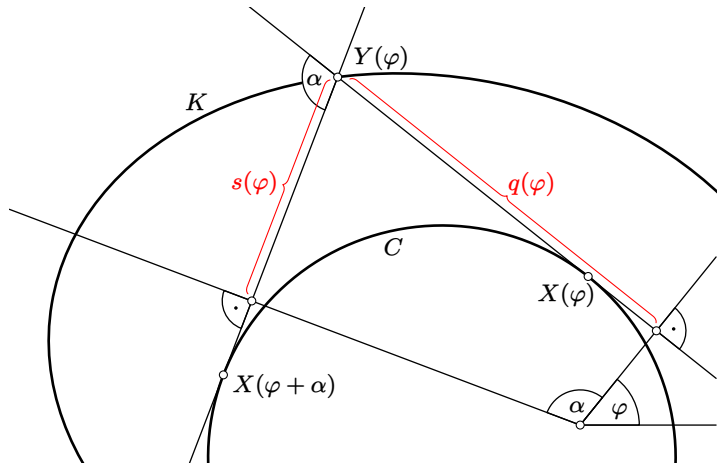
\begin{figure}[h!]
\begin{center}
\begin{tikzpicture}[line cap=round,line join=round,>=triangle 45,x=33,y=33]
\clip(-2.422822468778543,-0.8205349613616574) rectangle (5.595834348030821,4.382205581986205);

\draw [shift={(4.053784960182866,-0.3914429577865759)},line width=0.5pt] (0,0) -- (51.26367489787726:0.4022737533516404) arc (51.26367489787726:159.14957510924467:0.4022737533516404) -- cycle;
\draw [shift={(0.32799516197702117,1.0276052189443543)},line width=0.5pt] (0,0) -- (159.14957510924467:0.3352281277930336) arc (159.14957510924467:249.14957510924467:0.3352281277930336) -- cycle;
\draw [shift={(4.918258577071045,0.6861956384067843)},line width=0.5pt] (0,0) -- (-38.73632510212273:0.3352281277930336) arc (-38.73632510212273:51.263674897877266:0.3352281277930336) -- cycle;
\draw [shift={(4.053784960182866,-0.3914429577865759)},line width=0.5pt] (0,0) -- (0.:0.6704562555860673) arc (0.:51.26367489787726:0.6704562555860673) -- cycle;
\draw [shift={(2.4845810773250525,-0.7616369905418275)},line width=1pt,samples=100]  plot[domain=-0.34555990765957656:3.4412815293759813,variable=\t]({1.*2.652201292588279*cos(\t r)+0.*2.652201292588279*sin(\t r)},{0.*2.652201292588279*cos(\t r)+1.*2.652201292588279*sin(\t r)});
\draw [line width=.5pt,domain=4.053784960182866:5.595834348030821] plot(\x,{(-1.9728725072443418-0.*\x)/5.04});
\draw [line width=.5pt,domain=4.053784960182866:5.595834348030821] plot(\x,{(-5.998386654836674--1.3733206625998138*\x)/1.1016675576938164});
\draw [line width=.5pt,domain=-2.422822468778543:4.053784960182866] plot(\x,{(-4.294081977906017--1.4190481767309302*\x)/-3.7257897982058448});
\draw [rotate around={-9.424718464396593:(2.3669903638573646,0.27862269718654603)},line width=1.pt] (2.3669903638573646,0.27862269718654603) ellipse (4.367319374590568 and 3.3171931669370878);

\draw[fill=white,white] (1.12,3.5) circle(7pt);
\draw [line width=0.5pt,domain=-2.422822468778543:5.595834348030821] plot(\x,{(--9.581856271364549-1.6595813230220262*\x)/2.0688067886304418});

\draw [line width=.5pt,domain=-2.422822468778543:5.595834348030821] plot(\x,{(-0.21579774777555258-3.404251335311012*\x)/-1.2965832513775193});

\fill[line width=0.5pt] (0.14806037428079716,0.9469295852408006) circle (0.02);
\fill[line width=0.5pt] (5.11427441548313,0.707710209942433) circle (0.02);
\draw [shift={(1.302648294511109,3.5866111480809137)},line width=.5pt] (0,0) -- (141.26367489787725:0.4022737533516404) arc (141.26367489787725:249.14957510924467:0.4022737533516404) -- cycle;
 \draw [color=red,decoration={
        brace,
        ,
        raise=0.08cm,pre=moveto, pre length=2pt, post=moveto,post length=2pt
    },
    decorate] (0.3279951619770211,1.0276052189443543)-- (1.302648294511109,3.5866111480809137) node [pos=0.5,anchor=south,xshift=-15pt,yshift=-6pt] {\begin{footnotesize}$s(\varphi)$\end{footnotesize}};
    
    \draw [color=red,decoration={
        brace,
        mirror,
        raise=0.08cm,pre=moveto, pre length=2pt, post=moveto,post length=4pt
    },
    decorate] (4.918258577071045,0.6861956384067843)-- (1.302648294511109,3.5866111480809137) node [pos=0.5,anchor=south,xshift=13pt,yshift=-1pt] {\begin{footnotesize}$q(\varphi)$\end{footnotesize}};

\begin{footnotesize}
\draw[color=black] (1.935143192530894,1.65) node {$C$};
\draw [fill=white] (4.144162400347079,1.3071697980886143) circle (1.5pt);
\draw[color=black] (3.9,1.1) node {$X(\varphi)$};
\draw [fill=white] (1.302648294511109,3.5866111480809137) circle (1.5pt);
\draw[color=black] (1.8,3.81) node {$Y(\varphi)$};
\draw [fill=white] (4.053784960182866,-0.3914429577865759) circle (1.5pt);
\draw [fill=white] (0.3279951619770211,1.0276052189443543) circle (1.5pt);
\draw[color=black] (1.04,3.55) node {$\alpha$};
\draw[color=black] (4.02,-0.18) node {$\alpha$};
\draw[color=black] (-0.3,3.3) node {$K$};
\draw [fill=white] (0.006065043133589878,0.1823598127699017) circle (1.5pt);
\draw[color=black] (0.7,0.057762733456087456) node {$X(\varphi+\alpha)$};
\draw [fill=white] (4.918258577071045,0.6861956384067843) circle (1.5pt);
\draw[color=black] (4.5,-0.23) node {$\varphi$};
\end{footnotesize}
\end{tikzpicture}
\caption{Construction of equiangular Poncelet pairs.}\label{fig-2}
\end{center}
\end{figure}
\end{proof}

\begin{remark}
The Poncelet polygons in the Poncelet pair $(K_i,C)$, $i=0,\ldots,2k-1$, in Theorem~\ref{thm-6}
are given as follows: The Poncelet polygon with starting point $Y_i(\varphi)\in K_i$ has the vertices
$Y_i(\varphi+j\alpha_i)$, where $j=0,1,\ldots, m-1$. Here $m$ is minimal such that
$m\alpha_i$ is a  multiple of $2k\pi$. This implies that this Poncelet polygon has
\begin{equation}\label{counting_vertices_1}
m = \frac{2\pi}{\alpha_i}\mathrm{lcm}\left(\frac{\alpha_i}{2\pi},k\right)
\end{equation}
vertices. We adopt the notation $\operatorname{lcm}$ for the least common multiple. 

\end{remark}

\begin{figure}[h!]
\begin{center}
\begin{tikzpicture}[line cap=round,line join=round,x=5,y=5]
\draw[line width=0.8pt] (4.603745677565161,-17.576720695796684) -- (13.428108737407618,11.526967054417641) -- (-16.188619246620327,4.617244834086516) -- cycle;

\draw [line width=1pt,domain=0:6.33,samples=100,blue] plot({1.1547*(cos((0.523599 +\x) r)*(9. + 0.9*cos((2*\x) r) - 0.222222*cos((5*\x) r) + 0.285714*sin((3*\x) r)) - sin(\x r)*((9+0.9*cos(2*(8.37758 + \x) r)) +0.222222*sin((0.523599-5*\x) r)+0.285714*sin((3*\x) r)))}, 
{ 1.1547*(cos(\x r)*(9+0.9*cos(2*(8.37758 + \x) r)+0.22222*sin((0.523599 - 5*\x) r)+0.285714*sin((3*\x) r))+(9+0.9*cos((2*\x) r)-0.22222*cos((5*\x) r)+0.285714*sin((3*\x) r))*sin((0.523599 + \x) r))});


\draw [line width=0.5pt,fill=black,fill opacity=.05] (4.6037456775,-17.5767206957)-- (13.4281087374,11.5269670544) -- (-16.1886192466,4.6172448340)-- cycle;
\draw [line width=1pt,domain=0:6.33,samples=100,red] plot({cos(((\x r)))*(9+9/10*cos((2*(\x r)))-2/9*cos((5*(\x r)))+2/7*sin((3*(\x r))))-sin(((\x r)))*(6/7*cos((3*(\x r)))-9/5*sin((2*(\x r)))+10/9*sin((5*(\x r))))},{sin(((\x r)))*(9+9/10*cos((2*(\x r)))-2/9*cos((5*(\x r)))+2/7*sin((3*(\x r))))+cos(((\x r)))*(6/7*cos((3*(\x r)))-9/5*sin((2*(\x r)))+10/9*sin((5*(\x r))))});

\draw [line width=0.5pt,dotted] (-3.300594259181336,-18.22162764613673)-- (16.098139674089175,5.073106904164468);
\draw [line width=0.5pt,dotted] (16.098139674089175,5.073106904164468)-- (-13.775057354431185,10.2255353455177);
\draw [line width=0.5pt,dotted] (-13.775057354431185,10.2255353455177)-- (-3.300594259181336,-18.22162764613673);

\begin{footnotesize}
\draw[color=red,xshift=-6pt] (7.755804516064713,4.4114739592064405) node {$C$};
\draw[color=blue,xshift=6pt] (10.170599900053288,16.074421877619315) node {$K$};

\end{footnotesize}
\end{tikzpicture}
\qquad\quad
\begin{tikzpicture}[line cap=round,line join=round,x=7,y=7]
\clip(-14,-13) rectangle (14,12);

\draw [line width=1pt,domain=0:6.33,samples=100,red] plot({cos(((\x r)))*(9+9/10*cos((2*(\x r)))-2/9*cos((5*(\x r)))+2/7*sin((3*(\x r))))-sin(((\x r)))*(6/7*cos((3*(\x r)))-9/5*sin((2*(\x r)))+10/9*sin((5*(\x r))))},{sin(((\x r)))*(9+9/10*cos((2*(\x r)))-2/9*cos((5*(\x r)))+2/7*sin((3*(\x r))))+cos(((\x r)))*(6/7*cos((3*(\x r)))-9/5*sin((2*(\x r)))+10/9*sin((5*(\x r))))});

\draw [line width=0.4pt,dash pattern=on 3pt off 3pt,] (9.167702954792123,5.015396368707265)-- (-2.3549918821543483,9.495206662154017);
\draw [line width=0.4pt,dash pattern=on 3pt off 3pt,] (-2.3549918821543483,9.495206662154017)-- (-10.46825063480926,2.982508187808282);
\draw [line width=0.4pt,dash pattern=on 3pt off 3pt,] (-10.46825063480926,2.982508187808282)-- (-9.12150636038046,-5.762505668196141);
\draw [line width=0.4pt,dash pattern=on 3pt off 3pt,] (-9.12150636038046,-5.762505668196141)-- (1.5847888919338917,-9.924918879273427);
\draw [line width=0.4pt,dash pattern=on 3pt off 3pt,] (1.5847888919338917,-9.924918879273427)-- (10.381128183194487,-2.8638859589095285);
\draw [line width=0.4pt,dash pattern=on 3pt off 3pt,] (10.381128183194487,-2.8638859589095285)-- (9.167702954792123,5.015396368707265);

\draw[line width=.8pt,fill=black,fill opacity=0.05] (1.5847888919,-9.92491887) -- (10.38112818,-2.86388595) -- (12.098000805390411,-14.012264783655416) -- cycle;
\draw[line width=.8pt,fill=black,fill opacity=0.05] (9.167702954,5.01539636) -- (-2.354991882,9.495206662) -- (7.285977607146112,17.23423351622083) -- cycle;
\draw[line width=.8pt,fill=black,fill opacity=0.05] (-10.468250634,2.98250818) -- (-9.12150636,-5.76250566) -- (-17.368285446809566,-2.5563076799100988) -- cycle;


\draw [dash pattern=on 3pt off 3pt,line width=0.4pt,domain=-30.76614479250185:21.542308069724513] plot(\x,{(-35.469307919804955-3.543761256390596*\x)/0.5457464178498657});
\draw [dash pattern=on 3pt off 3pt,line width=0.4pt,domain=-30.76614479250185:21.542308069724513] plot(\x,{(-81.4288634994879--5.837687060396917*\x)/7.272322924722058});
\draw [dash pattern=on 3pt off 3pt,line width=.4pt,domain=-30.76614479250185:21.542308069724513] plot(\x,{(-136.9273834872706--6.2047914868549565*\x)/-15.959595912365497});

\draw [shift={(10.381128183194487,-2.8638859589095285)},line width=0.4pt] (0,0) -- (98.75488486543357:1.7) arc (98.75488486543357:218.75491373694894:1.7) -- cycle;
\draw [shift={(1.5847888919338917,-9.924918879273427)},line width=0.4pt] (0,0) -- (-141.24498209368255:1.7) arc (-141.24498209368255:-21.245070472813495:1.7) -- cycle;
\draw [shift={(-9.12150636038046,-5.762505668196141)},line width=0.4pt] (0,0) -- (-21.245266912042037:1.7) arc (-21.245266912042037:98.75485069012626:1.7) -- cycle;
\draw [shift={(-10.46825063480926,2.982508187808282)},line width=0.4pt] (0,0) -- (98.7549172635084:1.7) arc (98.7549172635084:218.7549370670535:1.7) -- cycle;
\draw [shift={(9.167702954792123,5.015396368707265)},line width=0.4pt] (0,0) -- (-21.24514821107853:1.7) arc (-21.24514821107853:98.75498103178045:1.7) -- cycle;
\draw [shift={(-2.3549918821543483,9.495206662154017)},line width=0.4pt] (0,0) -- (-141.24519007246343:1.7) arc (-141.24519007246343:-21.245159427270774:1.7) -- cycle;

\draw [line width=1pt,domain=0:6.33,samples=100,blue] plot({10.3923*cos((0.523599 + \x) r) + 1.03923*cos(2*\x r)*cos((0.523599 + \x) r) - 
 0.2566*cos(5*\x r)*cos((0.523599 + \x) r) + 10.3923*sin(\x r) + 
 1.03923*cos(2*(5.23599 + \x) r)*sin(\x r) - 
 0.2566*cos(5*(5.23599 + \x) r)*sin(\x r) + 
 0.329914*cos((0.523599 + \x) r)*sin(3*\x r) - 
 0.329914*sin(\x r)*sin(3*\x r)},{ -10.3923*cos(\x r) - 
 1.03923*cos(\x r)*cos(2*(5.23599 + \x ) r) + 
 0.2566*cos(\x r)*cos(5*((5.23599 + \x) ) r) + 0.329914*cos(\x r)*sin(3*\x r) + 
 10.3923*sin((0.523599 + \x) r) + 1.03923*cos(2*\x r)*sin((0.523599 + \x) r) - 
 0.2566*cos(5*\x r)*sin((0.523599 + \x) r) + 
 0.329914*sin(3*\x r)*sin((0.523599 + \x) r)});
 
\begin{footnotesize}
\draw[color=red,xshift=-3pt] (7.755804516064713,4.4114739592064405) node {$C$};
\draw[color=blue,xshift=2pt] (11.785522528458975,0.5730303179372186) node {$K$};
\end{footnotesize}
\end{tikzpicture}

\caption{Illustration for Theorem~\ref{thm-6}. Solutions for $k=1$: On the left an equiangular Poncelet triangle, shown in two positions.
On the right an equiangular Poncelet hexagon. Both solutions have external angle $\alpha=2\pi/3$. The hexagon can also
be seen as a hexagon with external angle $\alpha=\pi/3$.}\label{fig-2k}
\end{center}
\end{figure}
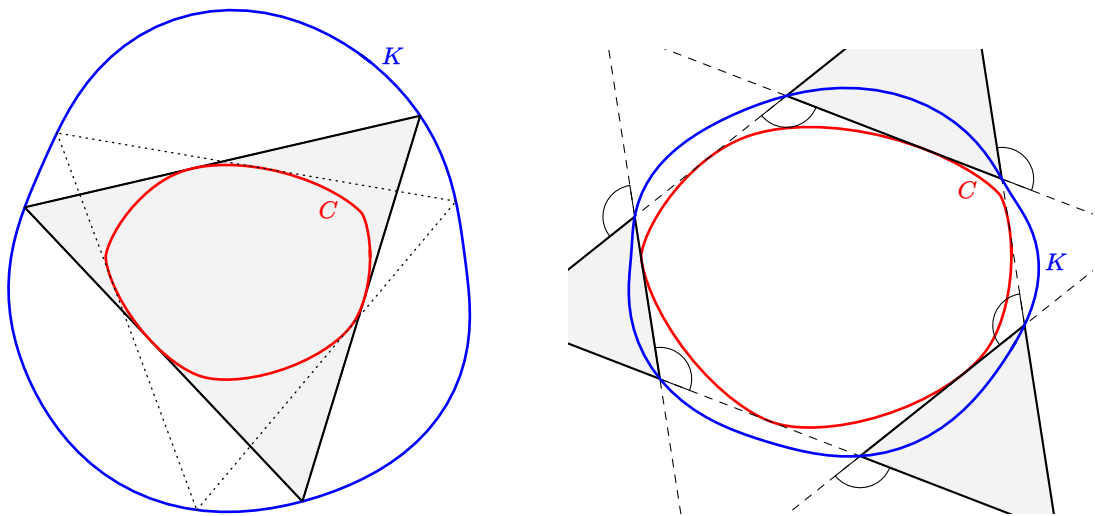
The vertex curves constructed in Theorem~\ref{thm-6} may exhibit singularities.
%
However, the following theorem shows that this only occurs in very special cases.
\begin{theorem}
If the given envelope $C$ has no self-intersections, then the vertex curve $K_i$ in  Theorem~\ref{thm-6}
is regular. If the given envelope $C$ has self-intersections and
all intersection angles are different from the angle $\alpha_i$, the vertex curve is regular. A singularity of $K$ can only
occur in a self-intersection of $C$.
\end{theorem}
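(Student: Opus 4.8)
The plan is to compute the velocity vector $Y_i'(\varphi)$ of the parametrization~\eqref{eq-Ki} in two adapted frames and read off exactly when it vanishes; here ``regular'' means that $Y_i$ is an immersion, equivalently that $Y_i'$ never vanishes. The starting observation is that, by construction, the vertex $Y_i(\varphi)$ lies on the tangent line of $C$ at $X(\varphi)$ and on the tangent line at $X(\varphi+\alpha_i)$. Writing $q_i$ for the function~\eqref{eq-q} with $\alpha$ replaced by $\alpha_i$, and putting $s_i(\varphi):=\csc(\alpha_i)\bigl(\cos(\alpha_i)\,p(\varphi+\alpha_i)-p(\varphi)\bigr)$ for the function $s$ of~\eqref{eq-euler} (obtained by taking the inner product of~\eqref{eq-euler} with $u(\varphi)$), equation~\eqref{eq-euler} gives the two representations
\[
Y_i(\varphi)=p(\varphi)u(\varphi)+q_i(\varphi)u'(\varphi)=p(\varphi+\alpha_i)u(\varphi+\alpha_i)+s_i(\varphi)u'(\varphi+\alpha_i).
\]
Differentiating each of them and using $u''=-u$ gives, respectively,
\[
Y_i'(\varphi)=\bigl(p'(\varphi)-q_i(\varphi)\bigr)u(\varphi)+\bigl(p(\varphi)+q_i'(\varphi)\bigr)u'(\varphi)
\]
and $Y_i'(\varphi)=\bigl(p'(\varphi+\alpha_i)-s_i(\varphi)\bigr)u(\varphi+\alpha_i)+\bigl(p(\varphi+\alpha_i)+s_i'(\varphi)\bigr)u'(\varphi+\alpha_i)$.

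Since $\{u(\psi),u'(\psi)\}$ is an orthonormal frame for every $\psi$, the first formula shows that $Y_i'(\varphi_0)=0$ forces $q_i(\varphi_0)=p'(\varphi_0)$, and the second forces $s_i(\varphi_0)=p'(\varphi_0+\alpha_i)$. But $q_i(\varphi_0)=p'(\varphi_0)$ is equivalent to $Y_i(\varphi_0)=p(\varphi_0)u(\varphi_0)+p'(\varphi_0)u'(\varphi_0)=X(\varphi_0)$, and likewise $s_i(\varphi_0)=p'(\varphi_0+\alpha_i)$ is equivalent to $Y_i(\varphi_0)=X(\varphi_0+\alpha_i)$. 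Hence a singular point of $K_i$ at $\varphi_0$ forces $X(\varphi_0)=X(\varphi_0+\alpha_i)$; since $0<\alpha_i<2k\pi$ the parameters $\varphi_0$ and $\varphi_0+\alpha_i$ are distinct modulo the period $2k\pi$, so this is a genuine self-intersection of $C$, whose two branches carry the tangent directions $u'(\varphi_0)$ and $u'(\varphi_0+\alpha_i)$ and therefore cross at the angle $\alpha_i$. All three assertions now follow: if $C$ has no self-intersection then $X(\varphi_0)\neq X(\varphi_0+\alpha_i)$ for every $\varphi_0$, so $Y_i'$ is nowhere zero; if every self-intersection of $C$ occurs at an angle different from $\alpha_i$, the displayed necessary condition can never be met; and in general a singular point of $K_i$ lies over a self-intersection of $C$. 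For sharpness one can also verify the converse: substituting $q_i(\varphi_0)=p'(\varphi_0)$ and $s_i(\varphi_0)=p'(\varphi_0+\alpha_i)$ back into the definitions of $q_i$ and $s_i$ and eliminating $p(\varphi_0+\alpha_i)$ produces the identity $p'(\varphi_0+\alpha_i)=\cos(\alpha_i)\,p'(\varphi_0)-\sin(\alpha_i)\,p(\varphi_0)$, from which $q_i'(\varphi_0)=-p(\varphi_0)$ and $s_i'(\varphi_0)=-p(\varphi_0+\alpha_i)$ follow by a one-line trigonometric computation; thus the two ``position'' conditions already force $Y_i'(\varphi_0)=0$, and $K_i$ is singular at $\varphi_0$ exactly when $X(\varphi_0)=X(\varphi_0+\alpha_i)$.

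The computations are short, so I do not expect a real obstacle. The two places asking for care are the chain rule in differentiating the second representation --- here it helps that $\varphi\mapsto u(\varphi+\alpha_i)$ still satisfies the ODE $w''=-w$, which keeps the formula clean --- and making precise the phrase ``the self-intersection angle equals $\alpha_i$'': the honest statement is that the two branches of $C$ through the critical point are parametrized by arguments differing by $\alpha_i$, equivalently that their tangent lines enclose the angle $\alpha_i$ read modulo $\pi$. With this understood, the frame computation above delivers the theorem.
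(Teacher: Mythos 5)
Your argument is correct and follows essentially the same route as the paper: both compute $Y_i'$ and show that its vanishing at $\varphi_0$ forces $p$ and $p'$ to satisfy exactly the conditions that make $X(\varphi_0)=X(\varphi_0+\alpha_i)$, i.e.\ a self-intersection of $C$ with angle $\alpha_i$ (the paper normalizes coordinates so that $p(\varphi_0)=p'(\varphi_0)=0$, whereas you read the same conditions off the two moving frames $\{u(\varphi),u'(\varphi)\}$ and $\{u(\varphi+\alpha_i),u'(\varphi+\alpha_i)\}$). Your added converse, showing the two position conditions already force $Y_i'(\varphi_0)=0$, is a correct sharpening that the paper does not state.
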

\begin{proof}
If we take the derivative in~(\ref{eq-Ki}), we find
$$
Y'_i(\varphi) = \csc(\alpha_i)\bigl(
p'(\varphi+\alpha_i) u'(\varphi) - p(\varphi+\alpha_i) u(\varphi)
-p'(\varphi) u'(\varphi+\alpha_i) - p(\varphi) u(\varphi+\alpha_i)
\bigr).
$$
We fix the parameter $\varphi$.
By shifting the origin $S$ to a point on the curve $C$ and rotating the coordinate system,
we may assume $\varphi=0$ and $p(0)=p'(0)=0$. Hence, a singularity $Y'_i(0)=0$ can only occur, if
$p(\alpha_i)=p'(\alpha_i)=0$. 
\end{proof}
An example of a singular vertex curve is shown in Figure~\ref{fig-singular}.
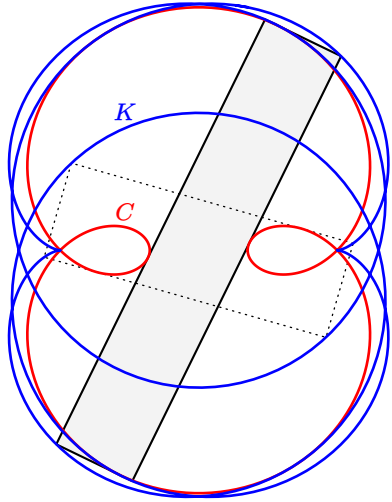
\begin{figure}[H]
\begin{center}
\begin{tikzpicture}[line cap=round,line join=round,x=55,y=55]
\draw[line width=0.8pt,fill=black,fill opacity=.05] (0.9694909067487621,1.3309583837226813) -- (-0.45192708283524596,-1.583391110399402) -- (-0.9694909067487847,-1.3309583837226697) -- (0.4519270828352749,1.5833911103993876) -- cycle;

\draw[line width=0.5pt,dotted] (1.0515849254160368,0.04759082265017423) -- (-0.866576420982722,0.5976143191075957) -- (-1.051584954541141,-0.047590814298689) -- (0.8665763252445536,-0.5976142916551467) -- cycle;

\draw [line width=1pt,domain=0:18.9,samples=400,red] plot({1/6*(5*cos(\x/3 r)-4*cos(\x r) + cos(5*\x/3 r)},{1/6*(5*sin(\x/3 r)-4*sin(\x r)+sin(5*\x/3 r))});

\draw [line width=1pt,domain=0:18.9,samples=400,blue] %
plot({(-2/3+cos(2*\x/3 r))*cos(\x r)-1/3*(2+3*cos(2*\x/3 r))*sin(\x r)},{-1/3*(cos(\x/3 r)+sin(\x/3 r))*(cos(\x/3 r)+sin(\x/3 r)) *(cos(\x/3 r)+sin(\x/3 r))*(-5+6*sin(2*\x/3 r))});
\begin{footnotesize}
\draw[color=red] (-.5,.26) node {$C$};
\draw[color=blue] (-.5,.95) node {$K$};
\end{footnotesize}
\end{tikzpicture}
\caption{An example of a vertex curve $K$ with singularities (cusps) for a Poncelet pair $(C,K)$ 
which carries rectangles, i.e., Poncelet polygons with exterior angle $\alpha=\pi/2$. The reason for the 
singularities is that the envelope $C$ has self-intersections
with intersection  angle of the same size $\pi/2$.
Note that here the extensions of the sides touch the envelope.}\label{fig-singular}
\end{center}
\end{figure}

Observe that the proof of Theorem~\ref{thm-6} yields explicit 
formulas~(\ref{eq-C}) and~(\ref{eq-K}) for an equiangular Poncelet pair $(K,C)$.
A simple, but particularly interesting family of examples is obtained by choosing
$p(\varphi) = a+b\cos(\ell \varphi)$. Multiplying $p$ by a constant results in 
a homothety of the curves $C$ and $K$, hence we may assume $b=1$ in the sequel. 
The condition $p(\varphi) + p''(\varphi)>0$ is satisfied if we choose
$a>\ell^2-1$ if $\ell>1$, and $a>1-\ell^2$ if $0<\ell<1$. 
In this situation, we get by~(\ref{eq-K}) 
for $\ell=2,\alpha=\frac{2\pi}3$, the Poncelet pair 
\begin{eqnarray*}
C:[0,2\pi) \to\mathbb R^2,&&\varphi\mapsto u(\varphi)(a + \cos(2 \varphi)) - 2 u'(\varphi) \sin(2 \varphi)\\
K:[0,2\pi) \to\mathbb R^2,&&\varphi\mapsto 2a u(\varphi)-u(3\varphi).
\end{eqnarray*}
Hence, the vertex curve $K$ is an epitrochoid. It turns out that the Poncelet triangle in this case is not only equiangular,
but even equilateral. This is a special case of Theorem~\ref{thm-equilateral} below.
Figure~\ref{fig-3}
shows on the left an example with $a=\frac85$.
Observe that for $a=2+\sqrt3$ we get the classical
design of a Wankel engine (see Figure~\ref{fig-3} on the right): Here, the constant $a$ is chosen as the smallest
value such that a Reuleaux triangle can turn inside the vertex curve $K$. In general we have the following.
\begin{theorem}\label{thm-equilateral}
Let $0<k\in\mathbb N$, $0<\ell\in\mathbb Q$, \textcolor{black}{$\ell\ne \frac{2k}{m}-1, m\in\Z$}, $n=\ell+1,\alpha=\frac{2k\pi}{n}$ 
and $p(\varphi)=a+\cos(\ell \varphi)$ with $a>\ell^2-1$ if $\ell>1$, and $a>1-\ell^2$ if $0<\ell<1$.
Then the Poncelet polygon $P$ for the Poncelet pair $(K,C)$ given by~(\ref{eq-C}) and~(\ref{eq-K})
is equiangular  with angle $\alpha$ and equilateral with side length 
$2a|\tan(\alpha/2)|$.
$K$ is an epitrochoid. 
The midpoint of $P$ rotates on a circle centered at the origin with radius $1$.
\end{theorem}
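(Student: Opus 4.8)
The plan is to substitute the support function $p(\varphi)=a+\cos(\ell\varphi)$ into the parametrizations~(\ref{eq-C}) and~(\ref{eq-K}) and to simplify the resulting trigonometric expression for the vertex curve down to a closed form, from which all four assertions can be read off. First I would check that Theorem~\ref{thm-6} is applicable: the radius of curvature is $\rho(\varphi)=p(\varphi)+p''(\varphi)=a-(\ell^2-1)\cos(\ell\varphi)$, which is strictly positive exactly under the stated bounds on $a$ (distinguishing the cases $\ell>1$ and $0<\ell<1$), and $\alpha=2k\pi/n$ is commensurable with $\pi$. Moreover the hypothesis $\ell\neq\frac{2k}{m}-1$ for $m\in\mathbb{Z}$ is equivalent to $\alpha\notin\pi\mathbb{Z}$, which is what guarantees that the Poncelet polygon is nondegenerate (and that $\cos(\alpha/2)\neq0$, so the claimed side length makes sense). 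Equiangularity with angle $\alpha$ is then built into the construction of Theorem~\ref{thm-6}: equation~(\ref{eq-euler}) forces the turn from the side direction $u'(\varphi)$ to the next, $u'(\varphi+\alpha)$, to be by the fixed angle $\alpha$ at every vertex.

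The computational heart is the simplification of~(\ref{eq-K}). I would split $Y(\varphi)$ into the part coming from the constant $a$ and the part coming from $\cos(\ell\varphi)$. For the first part, the sum-to-product identity $u'(\varphi)-u'(\varphi+\alpha)=2\sin(\alpha/2)\,u(\varphi+\alpha/2)$ does the job. For the second part the decisive observation is that $\ell\alpha=(n-1)\alpha=n\alpha-\alpha=2k\pi-\alpha$, so that $\cos(\ell(\varphi+\alpha))=\cos(\ell\varphi-\alpha)$; expanding this, together with $u'(\varphi+\alpha)=\cos\alpha\,u'(\varphi)-\sin\alpha\,u(\varphi)$ and the rotation identity $\cos(\ell\varphi)\,u(\varphi)+\sin(\ell\varphi)\,u'(\varphi)=u((\ell+1)\varphi)$, collapses that part to $\sin\alpha\,u(n\varphi)$. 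The outcome is the clean formula
\begin{equation*}
Y(\varphi)=\frac{a}{\cos(\alpha/2)}\,u(\varphi+\alpha/2)+u(n\varphi),
\end{equation*}
and after the harmless reparametrization $\varphi\mapsto\varphi-\alpha/2$ (using $u(n\varphi-k\pi)=(-1)^k u(n\varphi)$) this is precisely the standard parametrization of an epitrochoid whose ratio of circle radii is $n=\ell+1>1$; in particular the two radii are positive, so $K$ genuinely is an epitrochoid.

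From this closed form the remaining two claims are short computations. For the side lengths, $Y(\varphi+\alpha)-Y(\varphi)=\frac{a}{\cos(\alpha/2)}\bigl(u(\varphi+\tfrac{3\alpha}{2})-u(\varphi+\tfrac{\alpha}{2})\bigr)+\bigl(u(n\varphi+n\alpha)-u(n\varphi)\bigr)$; the second bracket vanishes because $n\alpha=2k\pi$, and applying $u(\psi+\alpha)-u(\psi)=2\sin(\alpha/2)\,u'(\psi+\alpha/2)$ to the first yields $Y(\varphi+\alpha)-Y(\varphi)=2a\tan(\alpha/2)\,u'(\varphi+\alpha)$, a vector of constant length $2a|\tan(\alpha/2)|$; hence the polygon is equilateral with exactly this side length, independently of the starting point. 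For the midpoint, write the $N$ vertices as $Y(\varphi+j\alpha)$, $j=0,\dots,N-1$; the term $u(n\varphi+jn\alpha)=u(n\varphi+2jk\pi)=u(n\varphi)$ is independent of $j$, while $\sum_{j=0}^{N-1}u(\varphi+\alpha/2+j\alpha)=0$ because $N\alpha$ is a multiple of $2\pi$ (by the Remark following Theorem~\ref{thm-6}) and $\alpha\notin2\pi\mathbb{Z}$ — a geometric sum of roots of unity. Therefore the centroid of $P$ equals $u(n\varphi)$, which traces the unit circle centered at the origin as $\varphi$ varies, and for an equilateral equiangular polygon the centroid is its midpoint.

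The main obstacle is organizational rather than conceptual: noticing that the congruence $\ell\alpha\equiv-\alpha\pmod{2\pi}$ is exactly what makes the $\cos(\ell\varphi)$-term telescope into a single rotating unit vector, and then keeping careful track of the vertex count $N$ (one needs $N\alpha\in2\pi\mathbb{Z}$, which comes from the Remark) and of the degenerate exponents $\alpha\in\pi\mathbb{Z}$ that the hypothesis $\ell\neq\frac{2k}{m}-1$ rules out. Once the closed form for $Y(\varphi)$ is in hand there is nothing deep left; every assertion drops out of it.
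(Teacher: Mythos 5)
Your proposal is correct and follows essentially the same route as the paper: substitute $p(\varphi)=a+\cos(\ell\varphi)$ into~(\ref{eq-K}), reduce (after the shift $\varphi\mapsto\varphi-\alpha/2$) to the epitrochoid form $a\sec(\alpha/2)u(\varphi)+(-1)^k u(n\varphi)$, and read off all four claims; you merely supply the trigonometric simplification the paper omits and compute the side length and centre directly rather than via the circumscribed circle of radius $a|\sec(\alpha/2)|$ with central angle $\alpha$ per side. No gaps.
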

\begin{proof}
We first use $p(\varphi)=a+\cos(\ell \varphi)$ in the general formula~(\ref{eq-K}).
Unfortunately this does not yield a very telling expression. However, if we
replace $\varphi$ by $\varphi-\alpha/2$ and observe that $k$ is an integer, we get the reparametrization
\begin{equation}\label{eq-KK}
K:\  [0,2k\pi)\to \mathbb R^2,\quad \varphi\mapsto Y(\varphi)=
a \sec(\alpha/2) u(\varphi) + (-1)^k u(n\varphi ).
\end{equation}
This shows that the vertex curve $K$ is an epitrochoid. The vertices of the Poncelet polygon $P$ are the points
$$\bigl\{Y(\varphi+j\alpha)=(-1)^ku(n\varphi)+a\sec(\alpha/2)u(\varphi+j\alpha), 
j=1,\ldots,\tfrac n{(n, k)}\bigr\}$$
where $(n,k)$ denotes the greatest common divisor of $n$ and $k$.
Its midpoint, given by $(-1)^ku(n\varphi)$, rotates on the unit circle.
The perimeter radius of $P$ is $a | \sec(\alpha/2)|$ and the center angle over a side
is $\alpha$. Hence we get for the side  length $s$ of $P$
$$
s=2a|\sec(\alpha/2) \sin(\alpha/2)|=2a\left|\tan\left(\alpha/2\right)\right|.
$$
\end{proof}
Figure~\ref{fig-5} illustrates Theorem~\ref{thm-equilateral} with two examples.
The first one is a Wankel engine with three combustion chambers.
\textcolor{black}{We note that in \cite[p.~515ff]{glaeser} a variant of a Wankel engine design with a spherical triangular rotor turning in an equilateral spherical conic is discussed.}
\begin{figure}[h!]
\begin{center}
\begin{tikzpicture}[line cap=round,line join=round,x=34.2,y=34.2]

\draw [line width=1pt,domain=0:6.3,samples=100,red] plot({((1.6+cos(2*\x r)/2)*cos(\x r)+sin(2*\x r)*sin(\x r)},{(1.6+cos(2*\x r)/2)*sin(\x r)-sin(2*\x r)*cos(\x r))});


\draw [line width=1pt,domain=0:6.28319,samples=100,blue] plot(
{1/2 *cos(3*\x r) + 8/5*(cos(\x r)- sqrt(3)*sin(\x r))},
{8/5*sqrt(3)*cos(\x r) + (8*sin(\x r))/5 + 1/2*sin(3*\x r)}
);
 

\draw[line width=.8pt,fill=black,fill opacity=.05]  (0.489, 3.658)-- (-2.7688, -0.826)--(2.7434, -1.4054)--cycle;
\draw[line width=.5pt,dotted] (-1.6555, 3.1704)-- (-2.4269, -2.3183)-- (2.7121, -0.242)--cycle;

\begin{footnotesize}

\draw[color=red] (1,.7) node {$C$};
\draw[color=blue] (2,3.2) node {$K$};

\end{footnotesize}
\end{tikzpicture}\qquad
\begin{tikzpicture}[line cap=round,line join=round,scale=.525]

\draw [line width=1pt,domain=0:6.28319,samples=100,blue] plot(
{cos(\x r)*(5 + 2*sqrt(3) - 2*cos(2*\x r))},
{(3 + 2*sqrt(3) - 2*cos(2*\x r))*sin(\x r)}
);
 
 
  \tkzDefPoint(6.514, 1.6868){A}
  \tkzDefPoint(-6.2178, 3.9318){B}
  \tkzDefPoint(-1.7961, -8.2167){C}

\tkzDrawPolygon[fill=black!5,color=black!5](A,B,C)
 \tkzDrawArc[fill=black!5,thick](A,B)(C)
 \tkzDrawArc[fill=black!5,thick](B,C)(A)
 \tkzDrawArc[fill=black!5,thick](C,A)(B)

\def\eps{.07}
\draw [line width=.5pt,domain=0:6.28319,samples=600,black,fill=white] plot(
{-.5+cos(\x r)*(3+sin(45*(\x-\eps) r)/15)},
{-.866+sin(\x r)*(3+sin(45*(\x-\eps) r)/15)}
);

\draw [line width=.5pt,domain=0:6.28319,samples=600,black,fill=black,fill opacity=.1] plot(
{cos(\x r)*(2+sin(30*\x r)/15)},
{sin(\x r)*(2+sin(30*\x r)/15)}
);


\begin{footnotesize}
\draw[color=blue] (4.5,7.5) node {$K$};


\end{footnotesize}
\end{tikzpicture}

\caption{A Poncelet pair with $k=1,n=3, p(\varphi)=\frac85+\frac12\cos(2\varphi)$ on the left,
the design of a Wankel engine on the right.
}\label{fig-3}
\end{center}
\end{figure}
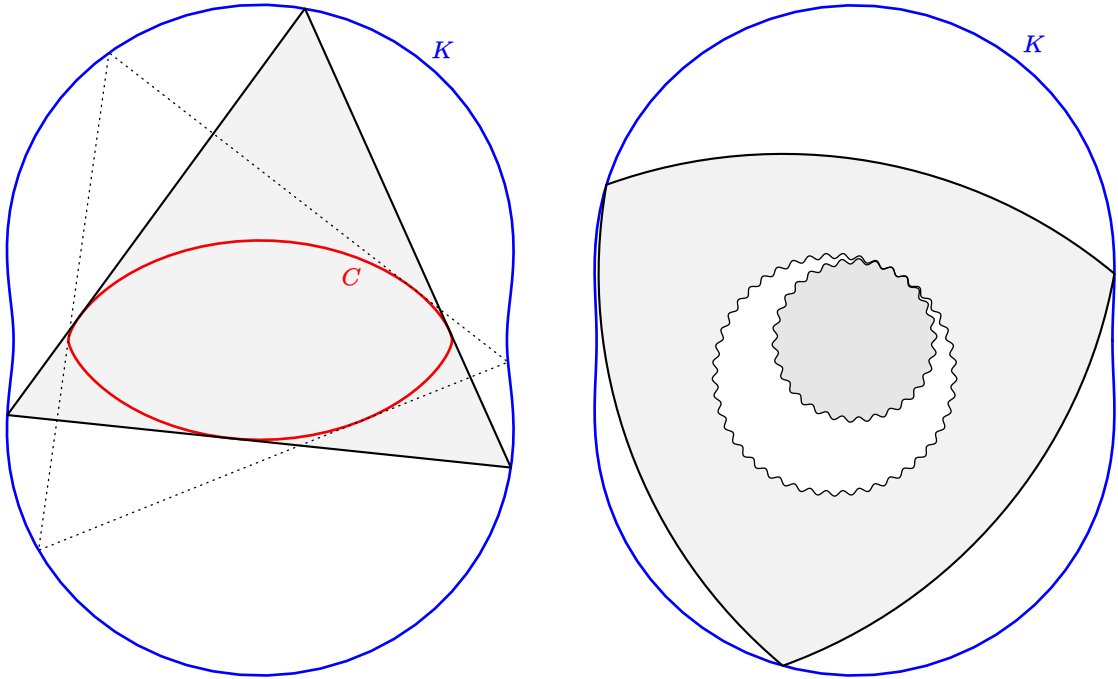
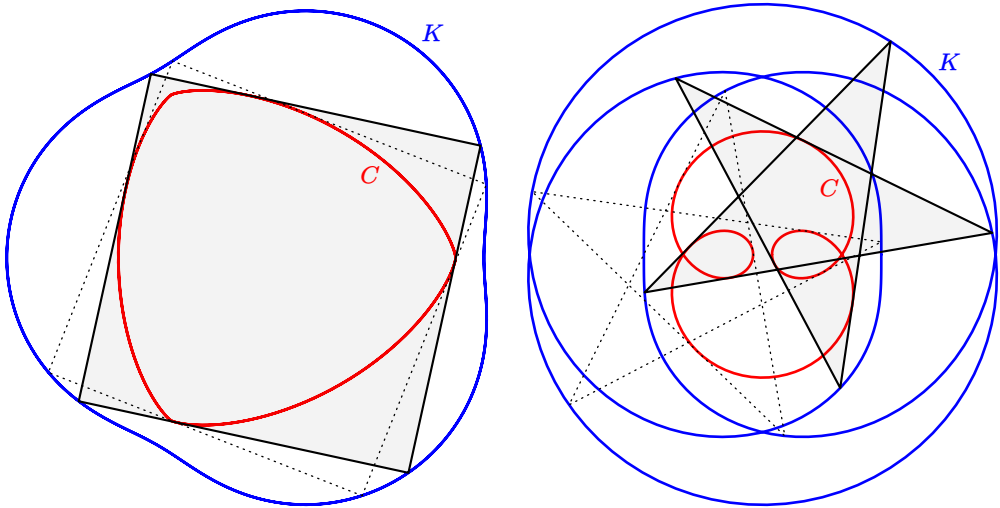
\begin{figure}[h!]
\begin{center}
\begin{tikzpicture}[line cap=round,line join=round,x=7.8,y=7.8]
\draw [line width=1pt,domain=0:18.8496,samples=200,red] plot(
{cos(\x r)*(8.1 + cos(3*\x r)) + 3*sin(\x r)*sin(3*\x r)},
{-8*(-1.0125 + (cos(\x r))^3)*sin(\x r}
);

\draw [line width=1pt,domain=0:18.8496,samples=300,blue] plot(
{11.4551*cos(\x r) - cos(4*\x r)}, {11.4551*sin(\x r) - sin(4*\x r)}
);

\tikzset{declare function={f1(\j)=-cos(4*\x r) + 11.4551*cos((1.5708*\j+ \x) r);}};
\tikzset{declare function={f2(\j)=-sin(4*\x r) + 11.4551*sin((1.5708*\j+ \x) r);}};
\def\x{.57}
\draw [line width=.8pt,fill=black,fill opacity=.05]({f1(1)},{f2(1)}) -- ({f1(2)},{f2(2)})--({f1(3)},{f2(3)})--({f1(4)},{f2(4)})--cycle;
\def\x{3.55}
\draw[line width=.5pt,dotted] ({f1(1)},{f2(1)}) -- ({f1(2)},{f2(2)})--({f1(3)},{f2(3)})--({f1(4)},{f2(4)})--cycle;
\begin{footnotesize}
\draw[color=blue] (8,10.9) node {$K$};
\draw[color=red] (5,4) node {$C$};
\end{footnotesize}
\end{tikzpicture}
\quad
\begin{tikzpicture}[line cap=round,line join=round,x=25,y=25]
\draw [line width=1pt,domain=0:18.8496,samples=200,red] plot(
{(-25/36*(-1+sqrt(5))+cos(2*\x/3 r))*cos(\x r)+2/3*sin(2*\x/3 r)*sin(\x r)},
{1/36*(30*sin(\x/3 r)-25*(-1+sqrt(5))*sin(\x r)+6*sin(5*\x/3 r))}
);

\draw [line width=1pt,domain=0:18.8496,samples=300,blue] plot(
{2.77778*cos(\x r)- cos(1.66667*\x r)}, {2.77778*sin(\x r) - sin(1.66667*\x r)}
);

\tikzset{declare function={f1(\j)=-cos(1.66667*\x r) + 2.77778*cos((3.76991*\j+ \x) r);}};
\tikzset{declare function={f2(\j)=-sin(1.66667*\x r) + 2.77778*sin((3.76991*\j+ \x) r);}};
\def\x{2.37}
\draw [line width=.8pt,fill=black,fill opacity=.05]({f1(1)},{f2(1)}) -- ({f1(2)},{f2(2)})--({f1(3)},{f2(3)})--({f1(4)},{f2(4)})--({f1(5)},{f2(5)})--cycle;
\def\x{3.94}
\draw[line width=.5pt,dotted] ({f1(1)},{f2(1)}) -- ({f1(2)},{f2(2)})--({f1(3)},{f2(3)})--({f1(4)},{f2(4)})--({f1(5)},{f2(5)})--cycle;

\begin{footnotesize}
\draw[color=blue] (2.8,2.9) node {$K$};
\draw[color=red] (1,1) node {$C$};
\end{footnotesize}
\end{tikzpicture}

\caption{A Wankel engine with three combustion chambers with parameters $k=1,\ell=3, n=4, a>\ell^2-1$ on the left, 
a regular pentagram as Poncelet polygon resulting from the parameters $k = 4, \ell = 2/3, n = l + 1, a = \cos(\alpha/2) n^2$ on the right.}\label{fig-5}
\end{center}
\end{figure}

\begin{remark}
In Figure~\ref{fig-5} on the right the parameter $a$ has been chosen minimal
with the property that the curvature of the curve $K$ does not change sign.
Indeed, if 
$a=\cos(\alpha/2)n^2$, then the curvature of $K$ is non-negative and there holds
$$
\det\begin{pmatrix}Y'(\varphi)& Y''(\varphi)\end{pmatrix}=2n^3(n+1)\begin{cases}\cos^2\left(\frac{\ell}{2}\varphi\right),&\text{if $k$ is odd}\\
\hfill \sin^2\left(\frac{\ell}{2}\varphi\right), &\text{if $k$ is even.}\end{cases}
$$
And if $a>\cos(\alpha/2)n^2$, then the curvature of $K$ is strictly positive.
\end{remark}
\subsection{Equiangular clans}\label{sec-ec}
Using the results from the previous section it is now straightforward to construct equiangular Poncelet clans.
Note that we generalize the definition of the term {\em equiangular\/} for clans in the following sense:
We no longer require that all exterior angles of the Poncelet polygon are equal for all starting points, 
but only that the angle in each vertex does not depend on the starting point.

\begin{coro}\label{coro-11}
Let $C$ be a closed $C^2$ curve in the Euclidean plane with non-vanishing curvature, given by~(\ref{eq-C}).
Let $\alpha_1,\alpha_2,\ldots, \alpha_n$ be given external angles such that $\alpha_1+\ldots+\alpha_n = 2m\pi$,
for a natural number $m\ge 1$. Then the curves $(C,K_1,K_2,\ldots,K_n)$ form a Poncelet clan, where
the vertex curve $K_i$ is given by the parametrization
$$
K_i:\  [0,2k\pi)\to \mathbb R^2,\quad  \varphi\mapsto Y_i(\varphi) = \csc(\alpha_i)\bigl(p(\varphi + \alpha_i)u'(\varphi) -p(\varphi)u'(\varphi+\alpha_i)\bigr).
$$
The polygon $P(\varphi)$ with vertices $Y_1(\varphi),Y_2(\varphi+\alpha_1),\ldots,Y_n(\varphi+\alpha_1+\ldots+\alpha_{n-1})\color{black},Y_1(\varphi+2\pi m),Y_2(\varphi+2\pi m + \alpha_1),\ldots,Y_n(\varphi+2\pi m+\alpha_1+\ldots+\alpha_{n-1}), Y_1(\varphi+4\pi m),\ldots$ 
is a Poncelet polygon for each value of $\varphi$.
\end{coro}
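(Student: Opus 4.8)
The plan is to reduce the corollary to repeated application of the single-angle construction from Theorem~\ref{thm-6}. The key observation is that equation~\eqref{eq-euler} in the proof of Theorem~\ref{thm-6} is a \emph{local} statement: given the support function $p$ of $C$ and an angle $\alpha_i$, the point $Y_i(\varphi)$ defined by~\eqref{eq-K} is precisely the intersection of the tangent line to $C$ at $X(\varphi)$ with the tangent line to $C$ at $X(\varphi+\alpha_i)$. Equivalently, the two tangent lines to $C$ issuing from $Y_i(\varphi)$ touch $C$ at parameters $\varphi$ and $\varphi+\alpha_i$, so that the external angle of the broken line at $Y_i(\varphi)$ equals $\alpha_i$. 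This is exactly the content established by the multiplication of~\eqref{eq-euler} by $u(\varphi+\alpha)$ and does not use any global closing hypothesis.

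First I would make this local statement explicit: for any $\varphi$ and any angle $\beta$ with $\sin\beta\neq 0$, the point $Q = \csc(\beta)\bigl(p(\varphi+\beta)u'(\varphi) - p(\varphi)u'(\varphi+\beta)\bigr)$ lies on the tangent to $C$ at $X(\varphi)$ and on the tangent to $C$ at $X(\varphi+\beta)$; this is just the derivation of~\eqref{eq-q} and~\eqref{eq-K} read without the periodicity assumption. Next I would chain these: starting from a vertex $Y_1(\varphi)$ on the tangent at parameter $\varphi$ and $\varphi+\alpha_1$, the vertex $Y_2(\varphi+\alpha_1)$ lies on the tangent at $\varphi+\alpha_1$ and $\varphi+\alpha_1+\alpha_2$, and so on; at step $j$ the new vertex $Y_j(\varphi+\alpha_1+\ldots+\alpha_{j-1})$ shares the tangent line at parameter $\varphi+\alpha_1+\ldots+\alpha_{j-1}$ with the previous vertex, so consecutive vertices are joined by a segment of a common tangent line of $C$, i.e.\ every side of the polygon is tangent to $C$. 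Hence the broken line $P(\varphi)$ is circumscribed about $C$ with each vertex on the corresponding $K_i$.

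It remains to argue closure, which is where the hypothesis $\alpha_1+\ldots+\alpha_n = 2m\pi$ enters. After one full pass through the $n$ angles the running parameter has advanced by $\alpha_1+\ldots+\alpha_n = 2m\pi$; since $p$ and $u$ are $2\pi$-periodic (and $2k\pi$-periodic as the domain of $C$), the configuration of tangent lines has returned to where it started, so $Y_1(\varphi+2\pi m)$ is again a vertex of the same type as $Y_1(\varphi)$, continuing the polygon. Because $p$ has period dividing $2k\pi$ and $2\pi m$ advances along the parameter circle, the sequence of vertices is eventually periodic, and the polygon closes up after a number of passes equal to the order of $2\pi m$ modulo $2k\pi$ (that is, after $2k/(2m,2k)$ passes the parameter returns to $\varphi$); thus $P(\varphi)$ is a genuine closed polygon inscribed in the union $K_1\cup\ldots\cup K_n$ in the cyclic order specified and circumscribed about $C$. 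Since $\varphi$ was arbitrary, $(C,K_1,\ldots,K_n)$ satisfies the closing property of Definition~\ref{def-clan}.

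I expect the only real subtlety to be bookkeeping the closure carefully: one must check that the listed vertex sequence $Y_1(\varphi),Y_2(\varphi+\alpha_1),\ldots$ actually terminates at a vertex that connects back to $Y_1(\varphi)$, which amounts to verifying that $2\pi m$ has finite order modulo the period $2k\pi$ of the data --- true since $m,k$ are integers --- and that no degeneracy ($\sin\alpha_i = 0$) occurs, which is implicit in the $\csc(\alpha_i)$ appearing in the parametrization of $K_i$. Everything else is a direct transcription of the local part of the proof of Theorem~\ref{thm-6}, applied once for each angle $\alpha_i$, with no new computation required.
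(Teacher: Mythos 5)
Your proposal is correct and follows essentially the same route as the paper: the paper's own proof simply notes that the formula for $K_i$ follows from~\eqref{eq-K} and that the polygon is tangential to $C$ with vertices on the $K_i$ by construction, which is exactly the local tangent-line intersection argument you spell out. Your additional bookkeeping of the closure (the parameter advancing by $2\pi m$ per pass and returning after $k/\gcd(m,k)$ passes through the $2k\pi$-periodic data) is a correct and welcome elaboration of what the paper leaves implicit in the vertex list of the statement.
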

Figure~\ref{fig-can} shows an example of such a Poncelet clan with three vertex curves $K_1,K_2,K_3$ and $k=2$, $m=1$.
\begin{figure}[H]
\begin{center}
\begin{tikzpicture}[line cap=round,line join=round,x=23,y=23]

\draw[line width=.5pt,dotted] (2.44423, 0.0979303)-- (-0.625738, 0.759942)-- (-1.94781, -2.03675)-- 
(5.634, 0.206043)-- (-0.50411, 1.52967)-- (-1.77029, -1.14878)-- cycle;

\draw[line width=.8pt,fill=black,fill opacity=.07] (-2.85395, 2.03352)--(-0.254474, -1.63231)-- (3.47336, -0.110361)-- (-3.3082,   3.19887)-- (-0.458934, -0.819213)--(1.42006, -0.0520878)-- (-2.85395, 
  2.03352)-- (-0.254474, -1.63231)-- (3.47336, -0.110361)-- (-3.3082,   3.19887)-- (-0.458934, -0.819213)-- (1.42006, -0.0520878)-- cycle;

\draw [line width=1pt,domain=0:12.57,samples=200,red] plot(
{0.375*cos(.5*\x r) + cos(\x r) + 0.125*cos(1.5*\x r)},
{0.375*sin(.5*\x r) +  sin(\x r) + 0.125*sin((1.5*\x r)});

\draw [line width=1pt,domain=0:12.57,samples=200,blue] plot(
{-2.08583*sin(\x r) -  1.04291*cos((0.5*(2.64159 + \x)) r)*sin(\x r) +  2.08583*sin((2.64159 + \x) r) + 1.04291*cos(0.5*\x r)*sin((2.64159 + \x) r)}, 
{2.08583*cos(\x r) + 1.04291*cos((0.5*(2.64159 + \x)) r)*cos(\x r)  -  2.08583*cos((2.64159 + \x) r) - 1.04291*cos(0.5*\x r)*cos((2.64159 + \x) r)}
);

\draw [line width=1pt,domain=0:12.57,samples=200,darkgreen] plot(
{-1.02685*sin(\x r) - 0.513427*cos((0.5*(1.34159 + \x)) r)*sin(\x r) +  1.02685*sin((1.34159 + \x) r) + 0.513427*cos(0.5*\x r)*sin((1.34159 + \x) r)}, 
{1.02685*cos(\x r) + 0.513427*cos((0.5*(1.34159 + \x)) r)*cos(\x r) - 1.02685*cos((1.34159 + \x) r) - 0.513427*cos(0.5*\x r)*cos((1.34159 + \x) r)}
);
   
\draw [line width=1pt,domain=0:12.57,samples=200,brown] plot(
{-1.34101*sin(\x r) - 0.670506*cos((0.5*(2.3 + \x)) r)*sin(\x r)  + 1.34101*sin((2.3 + \x) r) + 0.670506*cos(0.5*\x r)*sin((2.3 + \x) r)}, 
{1.34101*cos(\x r) + 0.670506*cos((0.5*(2.3 + \x)) r)*cos(\x r) - 1.34101*cos((2.3 + \x) r) - 0.670506*cos(0.5*\x r)*cos((2.3 + \x) r)}
);

\begin{footnotesize}
\draw[color=blue] (4.8,3.5) node {$K_1$};
\draw[color=darkgreen] (1.6,1.5) node {$K_2$};
\draw[color=brown] (2.9,2.4) node {$K_3$};

\draw[color=red] (0,1.13) node {$C$};
\end{footnotesize}
\end{tikzpicture}
\caption{A Poncelet clan according to Corollary~\ref{coro-11}.}\label{fig-can}
\end{center}
\end{figure}
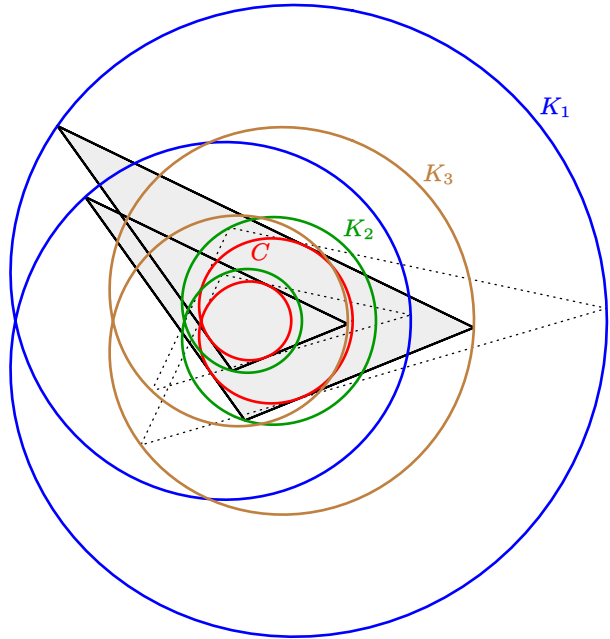

\begin{proof}
The formula for the vertex curves $K_i$ follows immediately from~\eqref{eq-K}.
The polygon $P(\varphi)$ is by construction tangential to the envelope $C$ and its vertices lie on $K_1,\ldots,K_n$. 
\end{proof}
Note that if the Poncelet polygon $P$ is a triangle, then the triangles $P(\varphi)$ are similar for all $\varphi$.
\textcolor{black}{
\begin{remark}
Like in Theorem \ref{thm-6}, each angle $\alpha_i$ in Corollary \ref{coro-11} can be replaced by
$$
\alpha_{j_i}=\alpha_i+ j_i\pi,
$$
where $j_i=0,\ldots,2k-1$.
Upon writing
$$
K_i^{j_i}:\  [0,2k\pi)\to \mathbb R^2,\quad  \varphi\mapsto Y_i^{j_i}(\varphi) = \csc(\alpha_{j_i})\bigl(p(\varphi + \alpha_{j_i})u'(\varphi) -p(\varphi)u'(\varphi+\alpha_{j_i})\bigr)
$$
we obtain all possible Poncelet clans with external angles $\alpha_1,\ldots,\alpha_n$ as
$$
(C,K_1^{j_1},K_2^{j_2},\ldots, K_n^{j_n}),
$$
where $j_i\in\{0,\ldots,2k-1\}$ leading to $(2k)^n$ Poncelet clans. For a fixed multi-index $\mathbf j = (j_1,\ldots,j_n)\in \N^n$ with entries in $\{0,\ldots,2k-1\}$, we can write down the vertices of the corresponding Poncelet polygon $P_{\mathbf j}(\varphi)$ as follows: Let $l$ be the smallest natural number such that
$$
l\sum_{i=1}^n \alpha_{j_i} = l\sum_{i=1}^n \left(\alpha_i + j_i\pi\right) = l \pi(2m+|\mathbf j|)
$$
is a multiple of $2\pi m$, hence
$$
l = \frac{\operatorname{lcm}(2m+|\mathbf j|,2k)}{2m+|\mathbf j|}.
$$
The vertices of $P_{\mathbf j}(\varphi)$ are are obtained by going through the rows of the $(l\times n)$-matrix $(\mathbf V_{\ell \nu})_{\mycom{1\leqslant \ell \leqslant l}{1\leqslant \nu\leqslant n}}$ with entries
$$
\mathbf{V}_{\ell \nu} = Y^{j_\nu}_\nu\left(\varphi + \pi(2m+|\mathbf j|)(\ell-1)+\sum_{i=1}^{\nu-1}\alpha_{j_i}\right),
$$
hence the number of vertices of the polygon is at most
\begin{equation}\label{counting_vertices_2}
\frac{n\cdot\operatorname{lcm}(2m+|\mathbf j|,2k)}{2m+|\mathbf j|}.
\end{equation}
Note that the formula \eqref{counting_vertices_2} yields the correct number of vertices if all angles $\alpha_1,\ldots,\alpha_n$ are pairwise distinct, \textcolor{black}{which is generically the case}, however, there are degenerate cases, where there are less vertices: For instance, if $\alpha_1=\alpha_2=\ldots=\alpha_n = \frac{2\pi m}{n}$ and $\mathbf j = (i,i,\ldots, i)$ for some $i\in\{0,\ldots,2k-1\}$, then all the curves of the corresponding Poncelet clan are identical and the situation reduces to the one in Theorem \ref{thm-6}, where the formula \eqref{counting_vertices_1} yields the correct number of vertices
$$
\frac{n}{2m+in}\operatorname{lcm}\left(\frac{2m+2ni}{n},2k\right),
$$
whereas fomula \eqref{counting_vertices_2} yields
$$
\frac{n}{2m+in}\operatorname{lcm}(2m+2ni,2k).
$$
\end{remark}}

\section{Construction of general Poncelet pairs and clans}\label{sec-3}
In this section we will construct general Poncelet pairs and clans. 
In Section~\ref{sec-3.2} and~\ref{sec-3.3} we specify the vertex curve $K$ and find corresponding envelopes, 
while in Section~\ref{sec-3.4} and~\ref{sec-3.5} we specify the envelope and find corresponding vertex curves.
The necessary tools are provided in Section~\ref{sec-3.1}.

\subsection{Torsion maps of \boldmath$S^1$}\label{sec-3.1}
In order to use standard terminology and to avoid the unnecessary appearance of the factor $2\pi$ throughout, we will use the identification $S^1\cong \R/\mathbb Z$ in this section.

The following construction of Poncelet pairs will make use of diffeomorphisms $f:S^1\to S^1$ with the property that the $n$-th iterate of $f$ equals the identity on $S^1$, i.e.\ $f^0=f^n=\mathrm{id}_{S^1}$, but $f^i\ne\mathrm{id}_{S^1}$ for $0<i<n$. We will call such a map $f$ a {\em torsion map} and we will without loss of generality restrict ourselves to orientation preserving diffeomorphisms.

As an example of a torsion map, consider the rotation $r_\alpha :S^1\to S^1$ about the angle $\alpha=\frac{m}{n}\in \mathbb Q$, where $\frac mn$ is a reduced fraction. Conjugating $r_\alpha$ by any diffeomorphism $h:S^1\to S^1$ yields another torsion map: If $f=h^{-1} \circ r_\alpha \circ h$, then $f^n = h^{-1}\circ r_\alpha^n\circ h = \mathrm{id}_{S^1}$.

We will now show that all 
torsion maps $f$ arise from rotations up to conjugation by diffeomorphisms of $S^1$.

Every orientation preserving diffeomorphism $f:S^1\to S^1$ lifts to an increasing diffeomorphism $F:\R\to \R$ such that $\pi\circ F = f\circ \pi$, where $\pi: \R \to S^1$, $x\mapsto x\mod 1$ is the usual quotient map and $F(x+1) = F(x) + 1$ for all $x\in\R$. Poincar\'e introduced the rotation number of $f$ as
$$
\tau(f)=\lim_{k\to\infty}\frac{F^k(x)-x}{k}\mod 1
$$
and showed that this limit always exists and that it is independent of the lift and $x\in\R$ (see~\cite{poincare}).

To proceed we now need the following result. A variant of it appears  for instance in \cite[Ex. 14 (b)]{Sanhueza2018}.
For the readers convenience we provide an explicit proof.

\begin{prop}\label{rotationnumberprop}Let $f:S^1\to S^1$ be a torsion map, i.e., 
an orientation preserving diffeomorphism such that $f^n = \mathrm{id}_{S^1}$ and $f^i\ne \mathrm{id}_{S^1}$ for $0<i<n$. Then we have:
\begin{enumerate}
\item
The rotation number of $f$ equals $\frac{m}{n}$ for some $m\in\mathbb Z$ and the fraction $\frac{m}{n}$ is reduced.
\item $f$ is conjugate to the rotation $r_{\frac mn}$ about the angle $\frac mn$, i.e.\ there is an orientation preserving diffeomorphism $h:S^1\to S^1$ such that $f = h^{-1} \circ r_{\frac mn}\circ h$.
\end{enumerate}
\end{prop}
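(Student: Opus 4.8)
The plan is to prove (1) first and then use it to get (2). For (1), the key observation is that for a torsion map $f$ with $f^n=\mathrm{id}$, every point has period dividing $n$, so $f$ has no wandering behaviour and, in particular, no fixed points unless $f=\mathrm{id}$. I would start by recalling the standard facts about Poincaré's rotation number $\tau(f)$: it is rational if and only if $f$ has a periodic orbit, and if $\tau(f)=\frac pq$ in lowest terms then every periodic point has period exactly $q$. Pick any lift $F$ of $f$. Since $f^n=\mathrm{id}_{S^1}$, the lift $F^n$ differs from the identity by an integer translation, i.e.\ $F^n(x)=x+m$ for some fixed $m\in\Z$ and all $x$ (the quantity $F^n(x)-x$ is continuous, integer-valued, hence constant). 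Then directly from the definition,
$$
\tau(f)=\lim_{k\to\infty}\frac{F^{k}(x)-x}{k}=\lim_{j\to\infty}\frac{F^{jn}(x)-x}{jn}=\frac{m}{n}\pmod 1,
$$
using that $F^{jn}(x)=x+jm$. So $\tau(f)=\frac mn$. It remains to see that $\frac mn$ is already reduced: if $d=\gcd(m,n)>1$ and $m=dm'$, $n=dn'$, then $\tau(f)=\frac{m'}{n'}$ in lowest terms, so every periodic point of $f$ has period $n'<n$; but $f^{n}=\mathrm{id}$ means \emph{every} point is periodic, hence $f^{n'}=\mathrm{id}$ on all of $S^1$, contradicting $f^i\ne\mathrm{id}$ for $0<i<n$. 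Hence $\gcd(m,n)=1$.

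For (2), the strategy is the classical averaging trick that conjugates a periodic circle diffeomorphism to a rigid rotation. Having established $\tau(f)=\frac mn$ with $\gcd(m,n)=1$, I would let $\sigma$ denote the rotation $r_{m/n}$ and seek $h$ with $h\circ f=\sigma\circ h$. Work on lifts: let $F$ be the lift of $f$ with $F^n(x)=x+m$, and let $\Sigma(x)=x+\frac mn$ be the lift of $\sigma$, so $\Sigma^n(x)=x+m$ as well. Define
$$
H(x)=\frac1n\sum_{i=0}^{n-1}\bigl(\Sigma^{-i}\circ F^{i}\bigr)(x)
=\frac1n\sum_{i=0}^{n-1}\Bigl(F^{i}(x)-\tfrac{im}{n}\Bigr).
$$
Then $H$ is smooth, $H(x+1)=H(x)+1$ (each summand has this property), and $H$ is increasing because it is an average of increasing functions — so $H$ descends to an orientation preserving diffeomorphism $h$ of $S^1$. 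The one-line computation
$$
H\circ F(x)=\frac1n\sum_{i=0}^{n-1}\Bigl(F^{i+1}(x)-\tfrac{im}{n}\Bigr)
=\frac1n\sum_{i=1}^{n}\Bigl(F^{i}(x)-\tfrac{(i-1)m}{n}\Bigr)
=\tfrac mn+\frac1n\sum_{i=0}^{n-1}\Bigl(F^{i}(x)-\tfrac{im}{n}\Bigr)=\Sigma\circ H(x),
$$
where in the last step I used $F^{n}(x)-\tfrac{(n-1)m}{n}=x+m-\tfrac{(n-1)m}{n}=x+\tfrac mn=F^0(x)+\tfrac mn$ to re-index the sum back to $i=0,\dots,n-1$. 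Passing to the quotient gives $h\circ f=\sigma\circ h$, i.e.\ $f=h^{-1}\circ r_{m/n}\circ h$, as required.

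The main obstacle — really the only nontrivial input — is the implication in part (1) that $f^n=\mathrm{id}$ forces $\tau(f)$ to be \emph{reduced} and not merely of the form $\frac mn$; this is where one must invoke the Poincaré theory relating the denominator of the rotation number to the common period of periodic orbits (every periodic point of a circle homeomorphism with $\tau=\frac pq$ in lowest terms has minimal period $q$). Everything else is bookkeeping: choosing the correct lifts so that the translation numbers of $F^n$ and $\Sigma^n$ agree, checking that $H$ commutes with the unit translation and is monotone, and the telescoping sum above. One should also note that no smoothness beyond what makes $f$ a diffeomorphism is needed for the conjugacy: the averaged $H$ inherits the regularity of $f$, so $h$ is a genuine diffeomorphism and not merely a homeomorphism, which matters for the applications to Poncelet pairs in the sections that follow.
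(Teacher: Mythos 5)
Your proposal is correct and follows essentially the same route as the paper: compute $\tau(f)=\frac mn$ from $F^n(x)=x+m$ along the subsequence $k=nd$, deduce reducedness from the fact that all periodic orbits of a circle diffeomorphism with rotation number $\frac pq$ (in lowest terms) have period $q$, and then conjugate to the rotation via the averaged lift. Your $H=\frac1n\sum_{i=0}^{n-1}\bigl(F^i(x)-\tfrac{im}{n}\bigr)$ differs from the paper's $H=\frac1n\sum_{j=0}^{n-1}F^j$ only by an additive constant, so the two constructions are the same up to normalization.
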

\begin{proof}
For the first item, consider the unique lift $F$ of $f$ such that $F(0)\in[0,1)$: If $0\le x < 1$ we find inductively
$
F^i(x) < i+1
$
%
%
and since $f^n = \mathrm{id}_{S^1}$ we have $F^n(x) = x+m$ for some $m\in \mathbb Z$ and $m\leq n$. If $k=n d$ for some $d\in\mathbb N$, it follows that
$$
\frac{F^k(x)-x}{k} = \frac{x+md-x}{nd}=\frac mn
$$
and therefore $\tau(f) = \frac mn$ (or, if $m=n=1$, $\tau(f)=0$ in which case $f=\mathrm{id}_{S^1}$).

If the rotation number of an orientation preserving diffeomorphism is $\frac{m}{n}\in \Q$, where $\frac mn$ is reduced, then every periodic orbit of $f$ has period $n$. Therefore, $\frac mn$ is reduced as the minimal length of a period of $f$ is $n$ by assumption \cite[p.12]{turer}.

For the second item we will construct a diffeomorphism $h:S^1\to S^1$ which conjugates $f$ to the rotation $r_{\frac mn}$ explicitly: The rotation $r_{\frac mn}:S^1\to S^1$ lifts to the increasing diffeomorphism $R_{\frac mn}:\R\to\R, x\mapsto x+\frac mn$. Define the increasing diffeomorphism $H:\R\to\R$,
$$
H = \frac{1}{n}\sum_{j=0}^{n-1} F^{j}.
$$
By construction, if $F\in C^k$, then $H\in C^k$ as well and $H\circ  F = R_{\frac mn}\circ H$. Since $H(x+1) = H(x) +1$, the map $H$ descends to an orientation preserving diffeomorphism $h: S^1\to S^1$.

Since $F = H^{-1}\circ R_{\frac mn}\circ H$ we obtain
$$
\pi \circ F = \pi \circ  H^{-1}\circ R_{\frac mn}\circ H = h^{-1}\circ r_{\frac mn}\circ h \circ \pi$$ so that $F$ is a lift of $h^{-1}\circ r_{\frac mn}\circ h$ and we conclude that
$
f = h^{-1}\circ r_{\frac mn}\circ h.
$
\end{proof}

\textcolor{black}{Before we proceed to construction of Poncelet pairs using Proposition~\ref{rotationnumberprop}, we remark that the connection between conjugacy to a rotation about a rational angle and Poncelet's theorem already appears in~\cite[Lemma 1.3]{king}, where Poncelet's theorem is recast in a measure theoretic framework for the case where $C$ and $K$ are nested ellipses. In a different context, exploiting dynamical billiards of an ellipse, the connection between a rational rotation number and periodicity of a billiard trajectories is established in~\cite{kolodziej}. Also in the context of billiard dynamics, for a given circle $K$, \cite{lopes} contructs a family of nested circles $C$ such that $(K,C)$ is a Poncelet pair. This article also provides a formula that counts the number of such Poncelet pairs, where the Poncelet polygon closes after $n$ steps.}

\subsection{Poncelet pairs for given vertex curve \boldmath$K$}\label{sec-3.2}

In order to construct Poncelet pairs, we will change back to the convention $S^1\cong \R/(2\pi \mathbb Z)$ in all that follows.

Let $K:S^1\to \R^2$, $\varphi\mapsto Y(\varphi)$, be a positively oriented regular $C^k$ curve, $k\ge 2$,
with non-vanishing curvature and let $f\in C^k(S^1,S^1)$ be a torsion map such that $f^0=f^n = \mathrm {id}_{S^1}$, where $n\geq 2$. 
According to the discussion in the previous section, there exists a $C^k$ diffeomorphism $h :S^1\to S^1$ such that
$$
f = h^{-1}\circ r_{\frac{2\pi m}{n}}\circ h,
$$
where $n> m\in\mathbb N$. For each fixed $\varphi\in S^1$, the function $f$ gives rise to a polygon $P$ with vertices
$$
Y(\varphi),Y(f(\varphi)),\ldots,Y(f^{n-1}(\varphi))
$$
and its edges are the segments between $Y(f^i(\varphi))$ and $Y(f^{i+1}(\varphi))$ where $i$ is taken mod $n$. 
For fixed $\varphi$ we consider the line
$$
s\mapsto E(s,\varphi)=(1-s)Y(\varphi)+sY(f(\varphi))
$$
connecting two consecutive edges of the polygon $P$.
The envelope $C:S^1\to\R^2, \varphi\mapsto X(\varphi)$, of these lines
when $\varphi$ varies is obtained by requiring that $X(\varphi) = E(s(\varphi),\varphi)$ is tangential to $\partial_sE(s,\varphi)$ for all $\varphi\in S^1$. This condition translates formally into
\begin{equation}\label{definition-s}
s= - \frac{\langle Y',J(Y\circ f-Y)\rangle}{\langle (Y\circ f-Y)',J(Y\circ f-Y)\rangle},
\end{equation}
where $
J = \left(\begin{smallmatrix}0 & -1 \\ 1& 0\end{smallmatrix}\right)
$.
Hence, the envelope is a regular $C^{k-1}$ curve $C:S^1\to\R^2$, given by
\begin{equation}\label{envelopeC}
X = Y-\frac{\langle Y',J(Y\circ f-Y)\rangle}{\langle (Y\circ f-Y)',J(Y\circ f-Y)\rangle} (Y\circ f-Y),
\end{equation}
whenever $s$ has no singularities. 
If $0\leq s\leq 1$, then the contact point of the line $E(s,\varphi)$ with the envelope $C$ lies on the 
segment joining the two vertices $Y(\varphi)$ and $Y(f(\varphi))$, otherwise on its prolongation.

Since $f=h^{-1}\circ r_\alpha \circ h$ with $0<\alpha\notin 2\pi\N$ being a rational multiple of $2\pi$, we consider $Z=Y\circ h^{-1}$. With the notation $t=h(\varphi)$ and $\Delta(t) = Z(t+\alpha)-Z(t)$, the formula for $X$ as a function of $t$ becomes
\begin{equation}\label{envelopeniceeq}
X = Z-\frac{\langle Z',J \Delta\rangle}{\langle \Delta',J\Delta\rangle }\Delta.
\end{equation}
Here, $X$ is well-defined if and only if $\langle\Delta',J\Delta\rangle\ne 0$ and in this case, $X\in C^{k-1}(S^1,\R^2)$ since $Z$ is of class $C^k$.
\begin{prop}\label{regularityofX}\leavevmode
\begin{enumerate}
\item Suppose that $X$ is well-defined. Then it is a regular curve, i.e.\ $X'\ne 0$, if and only if
$$
0\ne \det\begin{pmatrix}
\langle Z',J\Delta\rangle & \langle Z'_\alpha,J\Delta\rangle \\
\langle Z'',J\Delta\rangle + 2\langle Z',J\Delta'\rangle & \langle Z_\alpha'',J\Delta\rangle + 2\langle Z_\alpha',J\Delta'\rangle
\end{pmatrix},
$$
where $Z_\alpha(t) = Z(t+\alpha)$.

\item If $Z$ is of class $C^3$, $X$ is regular if and only if the curvature of $X$ is never zero.
\end{enumerate}
\end{prop}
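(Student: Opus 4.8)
The plan is to compute $X'$ explicitly from \eqref{envelopeniceeq} and read off when it vanishes, and then for the second item to invoke the characterization of regularity in terms of the envelope condition. Write $X = Z - \lambda\,\Delta$ with the scalar $\lambda = \langle Z',J\Delta\rangle / \langle \Delta',J\Delta\rangle$. Because $X(t)$ lies on the line $E(\cdot,\varphi)$ and is, by the defining envelope condition, tangent to that line, the derivative $X'$ must be parallel to $\Delta = Z_\alpha - Z$ (this is precisely the geometric meaning of "envelope": $X'(t)$ points along the envelope's tangent, which is the line $E(\cdot,\varphi)$ itself). So there is a scalar function $\mu$ with $X' = \mu\,\Delta$, and $X$ is regular at $t$ iff $\mu(t)\ne 0$. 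To pin down $\mu$, differentiate $X = Z-\lambda\Delta$ to get $X' = Z' - \lambda'\Delta - \lambda\Delta'$; pairing with $J\Delta$ kills the $\Delta$-term and, since $X'\parallel\Delta$, also kills the left side, giving $\langle Z',J\Delta\rangle = \lambda\langle\Delta',J\Delta\rangle$, which is just the definition of $\lambda$ — consistent but uninformative. Instead pair $X'$ with $JZ'$ or, more symmetrically, extract $\mu$ by pairing with a vector not parallel to $\Delta$. The cleanest route: write $X' = \mu\Delta$ and also $X' = Z' - \lambda'\Delta - \lambda\Delta'$, then take the cross product (the scalar $\langle\,\cdot\,,J\,\cdot\,\rangle$) of $X'$ with $Z'_\alpha$ on one hand and with $Z'$ on the other, using $\Delta' = Z'_\alpha - Z'$; after substituting the formula for $\lambda$ and clearing the common denominator $\langle\Delta',J\Delta\rangle$, the condition $\mu\ne 0$ becomes exactly the non-vanishing of the stated $2\times 2$ determinant. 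The bookkeeping is routine: expand $\langle X',J\Delta\rangle$-type and $\langle X',J Z'\rangle$-type brackets, collect the $Z',Z'',Z'_\alpha,Z''_\alpha,\Delta'$ terms, and recognize the $2\times 2$ determinant with columns indexed by "unshifted" versus "shifted" data and rows by "first-order" versus "second-order" data, where the second-order row picks up the $2\langle Z',J\Delta'\rangle$ correction from differentiating the product $\lambda\Delta$.

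For the second item, assume $Z\in C^3$ so that $X\in C^2$ and the curvature $\kappa_X$ of $X$ is defined and continuous. The claim is that $X$ is regular everywhere iff $\kappa_X$ never vanishes. One direction is immediate: if $\kappa_X(t)\ne 0$ for all $t$ then in particular $X'(t)\ne 0$ (curvature is only defined where $X'\ne0$, and a zero of $X'$ would be a point where $\kappa_X$ is either undefined or, after continuous extension, zero — one must argue the latter). The substantive direction is: if $X$ is regular then $\kappa_X$ is nowhere zero. Here I would use that $X$ is the envelope of the family of lines $t\mapsto E(\cdot,\varphi)$: the tangent line to $X$ at parameter $t$ is exactly that line, and as $t$ varies this line genuinely turns (its direction $\Delta(t)/|\Delta(t)|$ is non-stationary) precisely because $f$ is a torsion map conjugate to a rotation by $\alpha\notin 2\pi\Z$, so consecutive chords $Y(f^i\varphi)Y(f^{i+1}\varphi)$ are never parallel in a degenerate limiting way along the whole curve. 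Concretely: $\kappa_X = \langle X',JX''\rangle/|X'|^3$; using $X'=\mu\Delta$ one gets $X'' = \mu'\Delta + \mu\Delta'$, hence $\langle X',JX''\rangle = \mu^2\langle\Delta,J\Delta'\rangle$, so $\kappa_X$ has the same sign as $\langle\Delta,J\Delta'\rangle$ wherever $X'\ne0$. Thus the problem reduces to showing $\langle\Delta,J\Delta'\rangle\ne0$ whenever $X$ is regular. But $\langle\Delta',J\Delta\rangle\ne0$ is already assumed (it is the condition for $X$ to be well-defined!), and $\langle\Delta,J\Delta'\rangle = -\langle\Delta',J\Delta\rangle\ne0$. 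So in fact $\kappa_X$ never vanishes automatically, given regularity of $X$ and well-definedness — the two statements are equivalent because the denominator appearing in $X$ is, up to sign, the numerator of $\kappa_X\cdot|X'|^3/\mu^2$.

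The main obstacle I anticipate is purely organizational rather than conceptual: correctly carrying out the expansion in item (1) so that the many bracket terms assemble into the clean $2\times2$ determinant, in particular tracking the factor $2\langle Z',J\Delta'\rangle$ and the sign of $J$ ($J^2=-I$, $\langle Ja,b\rangle=-\langle a,Jb\rangle$). A secondary subtlety in item (2) is the boundary behavior of $\kappa_X$ at a hypothetical zero of $X'$: one should phrase the equivalence carefully so that "curvature never zero" is understood at points where $X$ is regular, and then observe that regularity of $X$ is forced at such points by the identity $\langle X',JX''\rangle=\mu^2\langle\Delta,J\Delta'\rangle$ together with $\langle\Delta,J\Delta'\rangle\ne0$. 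Once the identity $X'=\mu\Delta$ is established — which is the one genuinely geometric input, and follows from the envelope condition $\langle X',J\Delta\rangle=0$ that defines $s(\varphi)$ — everything else is a short computation.
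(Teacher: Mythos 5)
Your proposal follows essentially the same route as the paper's proof: for item (1) the envelope identity $\langle X',J\Delta\rangle=0$ forces $X'\parallel\Delta$, so regularity is equivalent to $\langle X',J\Delta'\rangle\ne 0$, which after multiplying by $\langle\Delta',J\Delta\rangle$ and using $\langle Z'_\alpha,J\Delta'\rangle=\langle Z',J\Delta'\rangle$ expands to the stated determinant; for item (2) the decomposition $X'=g\Delta$ yields $\langle X'',JX'\rangle=g^2\langle\Delta',J\Delta\rangle\ne 0$, with the converse trivial. The bookkeeping you defer in item (1) works out exactly as you predict, so the argument is complete in outline and matches the paper.
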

\begin{proof}
We have $\langle X',J\Delta\rangle = 0$. Hence $X$ is regular if and only if
$\langle X',J\Delta'\rangle\ne 0$ since $\Delta$ and $\Delta'$ are linearly independent. 
Multiplying this condition by $\langle \Delta',J\Delta\rangle\ne 0$ yields the equivalent condition
$$0\ne\langle Z'_\alpha,J\Delta \rangle\left(\langle Z'',J\Delta\rangle + 2\langle Z',J\Delta'\rangle\right)-\langle Z',J\Delta \rangle\left(\langle Z_\alpha'',J\Delta\rangle + 2\langle Z',J\Delta'\rangle\right),
$$
which can be restated as claimed, since $\langle Z'_\alpha,J\Delta'\rangle = \langle Z',J\Delta'\rangle$.

If $Z$ is of class $C^3$, $X$ is of class $C^2$. If $X'\ne 0$ it follows from $\langle X',J\Delta\rangle = 0$ that
$$
X' =\underbrace{ \frac{\langle X',\Delta \rangle}{\|\Delta\|^2}}_{=:g}\Delta
$$
where $g\in C^1(S^1)$ never vanishes. Hence
$$
\langle X'',JX'\rangle = \langle g'\Delta+g\Delta',gJ\Delta\rangle = g^2\langle \Delta',J\Delta\rangle\ne 0,
$$
since $X$ is well-defined. Conversely, if $\langle X'',JX'\rangle\ne 0$, $X'$ cannot vanish.
\end{proof}
The next theorem is the main result of this section. It shows how to construct a 
Poncelet pair $(K,C)$ if the vertex curve $K$ and a torsion map $f:S^1\to S^1$ are given.
\begin{theorem}\label{poncelet-pair-no-self-intersection}
Let $K:S^1\to\R^2$, $\varphi\mapsto Y(\varphi)$, be a positively oriented regular $C^k$ curve, $k\geq 2$, with non-vanishing curvature.
Then the pair $(K,C)$, where the envelope $C\in C^{k-1}(S^1,\R^2)$ is defined via~\eqref{envelopeC}, is a Poncelet pair.
If $k\geq 3$ and the curvature of $C$ does not vanish, then $C$ is a regular curve.
If $K$  has no self-intersections, and $k\geq 2$, then the function $s$ defined in~\eqref{definition-s} satisfies $0<s<1$,
i.e., the points of contact of the Poncelet polygon with the envelope $C$ lie inside the sides.
\end{theorem}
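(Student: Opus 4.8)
The plan is to verify the three assertions in turn, relying on the formulas and on Proposition~\ref{regularityofX}.

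For the first claim, that $(K,C)$ is a Poncelet pair: fix $\varphi\in S^1$ and form the polygon $P$ with vertices $Y(\varphi),Y(f(\varphi)),\ldots,Y(f^{n-1}(\varphi))$. Since $f^n=\mathrm{id}_{S^1}$ this is a closed $n$-gon, and every point $Y(\psi)\in K$ is a vertex of the polygon obtained by starting the orbit at $\psi$; so the vertex curve is fully ``covered''. It remains to see that each side of $P$ is tangent to the single curve $C$ given by~\eqref{envelopeC}. By construction $X(\varphi)=E(s(\varphi),\varphi)$ lies on the line through $Y(\varphi)$ and $Y(f(\varphi))$, and the defining relation for $s$ in~\eqref{definition-s} is exactly the condition $\langle X'(\varphi),J(Y(f(\varphi))-Y(\varphi))\rangle=0$, i.e.\ $X'(\varphi)$ is parallel to the direction $Y(f(\varphi))-Y(\varphi)$ of that side; hence the side through $Y(\varphi),Y(f(\varphi))$ is tangent to $C$ at $X(\varphi)$. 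Applying this with $\varphi$ replaced by $f^i(\varphi)$ shows that the $i$-th side of $P$ is tangent to $C$ at $X(f^i(\varphi))$. So $P$ is inscribed in $K$ and circumscribed about $C$ for every starting point, which is the assertion. (One should note here that $X$ is globally well-defined because $f=h^{-1}\circ r_\alpha\circ h$, so via~\eqref{envelopeniceeq} the denominator $\langle\Delta',J\Delta\rangle$ is, up to the nonzero factor $h'$, the quantity governing regularity; nondegeneracy of the construction is what is implicitly assumed.)

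The second claim — that if $k\geq 3$ and the curvature of $C$ is nowhere zero then $C$ is regular — is immediate from part~(2) of Proposition~\ref{regularityofX}: with $Z=Y\circ h^{-1}$ of class $C^k$, $k\geq 3$, that proposition states that $X$ is regular if and only if its curvature never vanishes. (Since $h$ is a diffeomorphism, regularity of $X$ as a function of $t$ is equivalent to regularity of $C$ as a function of $\varphi$.)

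The third claim is the substantive one and I expect it to be the main obstacle. We must show that if $K$ has no self-intersections then $0<s<1$, i.e.\ the contact point lies strictly between the two vertices $Y(\varphi)$ and $Y(f(\varphi))$. The idea: $K$ is a positively oriented, regular, simple closed curve with non-vanishing curvature, hence a convex curve (a simple closed $C^2$ curve with curvature of constant sign is convex). For a convex curve, the chord from $Y(\varphi)$ to $Y(f(\varphi))$ together with the arc between them bounds a convex region, and the tangent line to $K$ at any interior parameter of that arc meets the chord's supporting line on the far side; more to the point, the envelope of the family of chords $s\mapsto E(s,\varphi)$ as $\varphi$ varies — which is exactly $C$ — has its point of tangency in the interior of each chord precisely when the curve is convex and the ``step'' $f$ is orientation preserving without fixed points. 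Concretely, I would write $s=-\langle Y',J(Y\circ f-Y)\rangle/\langle(Y\circ f-Y)',J(Y\circ f-Y)\rangle$ and $1-s=\langle (Y\circ f)'f'^{-1}\!,\,J(Y\circ f-Y)\rangle/\langle(Y\circ f-Y)',J(Y\circ f-Y)\rangle$-type expressions, and argue that numerator and denominator have the same sign using convexity: the quantity $\langle Y'(\varphi),J(Y(f(\varphi))-Y(\varphi))\rangle$ is the signed area swept by the tangent ray at $Y(\varphi)$ against the chord, which for a positively oriented convex curve is negative (the chord lies to the left of the forward tangent), and similarly at the other endpoint, while $\langle(Y\circ f-Y)',J(Y\circ f-Y)\rangle$ controls the turning of the chord direction and is negative as well; the ratios then lie in $(0,1)$. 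The delicate point is to make the convexity/orientation signs rigorous for all $\varphi$ simultaneously and to rule out the degenerate boundary values $s\in\{0,1\}$, which would force the chord direction to be tangent to $K$ at an endpoint — impossible for a strictly convex curve with a genuine step $f\ne\mathrm{id}$. I would handle the strict inequalities by observing that equality $s=0$ or $s=1$ would make $Y(f(\varphi))-Y(\varphi)$ parallel to $Y'(\varphi)$ or to $(Y\circ f)'(\varphi)$, i.e.\ a secant parallel to a tangent, which strict convexity forbids.
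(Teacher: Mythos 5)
Your treatment of the first two claims is fine and matches the paper: the closing of the polygon follows from $f^n=\mathrm{id}_{S^1}$ together with the tangency encoded in~\eqref{definition-s}, and the second claim is exactly part~(2) of Proposition~\ref{regularityofX} (the paper in fact dismisses these two parts with ``only the last part remains to be proven''). Your strategy for the third claim --- convexity of a simple closed positively oriented curve with non-vanishing curvature, plus tangent-line side conditions at the two endpoints of each chord --- is also the paper's strategy. The problem is that your sign analysis, which you yourself flag as the delicate point, is not carried out and, as stated, is inconsistent: you claim that both the numerator $\langle Y',J(Y\circ f-Y)\rangle$ and the denominator $\langle (Y\circ f-Y)',J(Y\circ f-Y)\rangle$ are negative, but since $s$ is \emph{minus} their quotient, equal signs would give $s<0$, contradicting the conclusion $s\in(0,1)$ you then assert. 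With the paper's conventions ($J$ = rotation by $+\pi/2$, positive orientation, so $JZ'$ is the inward normal) the denominator is in fact \emph{positive}, not negative.

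The missing idea is the clean derivation of the two endpoint inequalities and of the denominator's sign from them. Writing $Z=Y\circ h^{-1}$ and $\Delta(t)=Z(t+\alpha)-Z(t)$, convexity (the curve lies on one side of each tangent) together with positive orientation gives
\begin{equation*}
\langle \Delta(t),JZ'(t)\rangle>0 \qquad\text{and}\qquad \langle -\Delta(t),JZ'(t+\alpha)\rangle>0 .
\end{equation*}
Adding these two inequalities yields $\langle\Delta',J\Delta\rangle>0$, which is both the well-definedness of $X$ and the correct (positive) sign of the denominator; the first inequality alone then gives $s>0$, and $s<1$ is, after clearing the positive denominator, literally equivalent to the second inequality. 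This also makes your separate argument for excluding $s\in\{0,1\}$ unnecessary: the strict inequalities above (which hold because non-vanishing curvature forces the curve to meet each tangent line only at the point of tangency, while $f$ has no fixed points) already give strictness. As written, your proof of the third claim does not go through without this correction.
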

\begin{proof}
Only the last part of the claim remains to be proven. So, assume that $K$ has no self-intersections.
We parametrize $C$ via~\eqref{envelopeniceeq} and show that $\langle\Delta',J\Delta\rangle>0$, which implies that $C$ is well-defined. Since $Z$ has no self-intersections, it parametrizes the boundary of a convex set and hence it lies entirely on one side of any of its tangents. This together with the fact that $Z$ is positively oriented and has non-vanishing curvature translates into the fact that for any $0<\alpha<2\pi$, we have
\begin{equation}\label{convexity-estimates}
\langle \Delta(t),JZ'(t)\rangle > 0\text{ and }\langle-\Delta(t),JZ'(t+\alpha)\rangle>0.
\end{equation}
Adding the two inequalities gives
$
0<\langle \Delta(t),J(Z'(t)-Z'(t+\alpha))\rangle = \langle \Delta',J\Delta\rangle.
$

We are left to show that $s\in(0,1)$, where $s=-\frac{\langle Z',J \Delta\rangle}{\langle \Delta',J\Delta\rangle}$. According to the first inequality of~\eqref{convexity-estimates}, we have $-\langle Z',J\Delta\rangle = \langle JZ',\Delta\rangle>0$ so that $s>0$. The condition $s<1$ is equivalent to
$$
-\langle Z'(t),J\Delta(t)\rangle<\langle \Delta'(t),J\Delta(t)\rangle \Longleftrightarrow 0<  \langle Z'(t+\alpha),J\Delta(t)\rangle,
$$
which holds true by the second inequality of~\eqref{convexity-estimates}.
\end{proof}
The next result states that a given Poncelet pair $(K,C)$ induces a torsion map $f$.
\begin{prop}\label{pro-reg}
Let $(K,C)$ be a Poncelet pair for Poncelet $n$-gons. Assume
that $K$ is a $C^2$ curve parametrized by $\gamma:S^1\to \R^2$
with $\dot\gamma\neq 0$, and $C$ a $C^2$ curve with non-vanishing curvature.
Let  $P_1=\gamma(t_1), P_2=\gamma(t_2)$ be two consecutive vertices of a Poncelet polygon.
Then the map $f: S^1\to S^1, t_1\mapsto t_2$, constitutes an orientation preserving $C^2$ diffeomorphism with
$f^n=\operatorname{id}_{S^1}$ and $f^i\neq \operatorname{id}_{S^1}$  for $0<i<n$.
\end{prop}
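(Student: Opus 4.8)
The plan is to present $f$ locally as the solution of an implicit equation and to read off its global properties from the geometry. Parametrise $C$ by a $C^2$ map $c:S^1\to\R^2$ with non-vanishing curvature, fix a Poncelet polygon, and look at the side $e$ leaving a vertex $P_1=\gamma(t_1)$ towards the next vertex $P_2=\gamma(t_2)$. Writing $\theta$ for the angle of the outer normal of $e$ and $u(\theta)=(\cos\theta,\sin\theta)$, the side $e$ is the line $\{x:\langle x,u(\theta)\rangle=p(\theta)\}$, where $p$ is the (locally defined) support function of $C$; thus $\theta=\theta(t_1)$ solves $F(t_1,\theta):=\langle\gamma(t_1),u(\theta)\rangle-p(\theta)=0$, and then $t_2=f(t_1)$ is the solution near $t_2$, $t_2\ne t_1$, of $G(t_1,t_2):=\langle\gamma(t_2)-\gamma(t_1),u(\theta(t_1))\rangle=0$. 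The key analytic point is that a $C^2$ curve with non-vanishing curvature has a $C^2$ support function — the envelope identity $p'(\theta)=\langle c(\sigma(\theta)),u'(\theta)\rangle$, with $c(\sigma(\theta))$ the contact point, absorbs the apparent loss of a derivative — so $F\in C^2$, and $\partial_\theta F=\langle\gamma(t_1)-c(\sigma(\theta)),u'(\theta)\rangle=\pm\operatorname{dist}(P_1,c(\sigma(\theta)))\ne 0$, since the contact point of a side is never one of its endpoints (no vertex of a genuine $n$-gon lies on $C$). Hence $\theta(t_1)\in C^2$ by the implicit function theorem, $G\in C^2$, and a second application yields $f\in C^2$ with $f'(t_1)=-\partial_{t_1}G/\partial_{t_2}G$, provided $\partial_{t_2}G=\langle\gamma'(t_2),u(\theta(t_1))\rangle\ne 0$, i.e.\ provided $e$ is not tangent to $K$ at $P_2$.

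Establishing this last non-degeneracy, and with it $f'\ne 0$, is the heart of the argument, and here the assumption $\kappa_C\ne 0$ is used through the fact that $C$ is the envelope of the sides. Writing the contact point of $e$ with $C$ as $X=\gamma(t_1)+s\,(\gamma(t_2)-\gamma(t_1))$ for a suitable $s=s(t_1)$ and differentiating along the family, $X'$ must be parallel to $e$; its component transverse to $e$ gives $(1-s)\langle\gamma'(t_1),u(\theta)\rangle+s\,f'(t_1)\langle\gamma'(t_2),u(\theta)\rangle=0$, while the tangent direction of $C$ at $X$ equals the direction of $e$, so that $\kappa_C\ne 0$ precludes $\theta'(t_1)=0$ (granting $X'(t_1)\ne 0$, which follows from regularity of $C$). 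A short case analysis then shows that if $e$ were tangent to $K$ at $P_2$ or at $P_1$, or if $f'(t_1)=0$, one is driven either to $s\in\{0,1\}$ — a vertex on $C$, already excluded — or to $\theta'(t_1)=0$, i.e.\ vanishing curvature of $C$: a contradiction in each case. Thus $\partial_{t_2}G\ne 0$ and $f'\ne 0$ throughout. I expect this bookkeeping, together with the slightly delicate $C^2$-regularity of the support function, to be the main obstacle; the remaining steps are soft.

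It remains to assemble the conclusion. As a $C^2$ self-map of $S^1$ with non-vanishing derivative, $f$ is a covering map of $S^1$, so it has a well-defined degree. The Poncelet-pair property supplies, for every $t$, a Poncelet $n$-gon with vertex $\gamma(t)$ whose sides close up after exactly $n$ steps, which says precisely $f^n=\operatorname{id}_{S^1}$; and $f^i\ne\operatorname{id}_{S^1}$ for $0<i<n$, since a genuine $n$-gon has $n$ distinct vertices, so $f^i(t)\ne t$ for at least one $t$. In particular $f$ is bijective, whence $\deg f=\pm1$ and $f$ is a $C^2$ diffeomorphism of $S^1$. If $\deg f=-1$, then $f$, an orientation-reversing homeomorphism of $S^1$, would have a fixed point, i.e.\ a side of length zero in some Poncelet polygon — impossible. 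Hence $\deg f=1$, $f$ is orientation preserving, and all the assertions follow.
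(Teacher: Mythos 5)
Your overall architecture is close to the paper's: both proofs realize $f$ as the composition of two implicit functions (vertex $\to$ tangency datum $\to$ next vertex) and then read off $f^n=\operatorname{id}_{S^1}$, injectivity and orientation from the closing property. Your choice of the support function in the normal angle $\theta$ is a genuinely nice variant: since a $C^2$ curve with non\-vanishing curvature has a $C^2$ support function (the identity $p'(\theta)=\langle c(\sigma(\theta)),u'(\theta)\rangle$ recovers the lost derivative), your $F$ and $G$ are honestly $C^2$ and the implicit function theorem gives $f\in C^2$ directly. The paper instead works with local graphs $g$ (for $K$) and $h$ (for $C$); there the defining map is only $C^1$ (it contains $h'$), and the $C^2$ regularity of $\mu(x)$ has to be rescued by observing that the explicit formula for $\mu'(x)$ contains no $h''$. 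Your route avoids that extra step. The final assembly (covering map, degree $\pm1$, a fixed point of an orientation-reversing map would give a side of length zero) is also fine and fills in what the paper dispatches in one sentence.

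The genuine gap is in what you yourself call the heart of the argument: the exclusion of the degenerate tangency cases. Your case analysis hinges on the assertion that $\theta'(t_1)=0$ means ``vanishing curvature of $C$'', which is false. From $F(t_1,\theta)=0$ one gets $\theta'=-\langle\gamma'(t_1),u(\theta)\rangle/\langle\gamma(t_1)-c(\sigma(\theta)),u'(\theta)\rangle$, so $\theta'(t_1)=0$ is equivalent to the side being tangent to $K$ at $P_1$ --- exactly one of the configurations you are trying to rule out --- and has nothing to do with $\kappa_C$. Likewise, ``$X'(t_1)\ne0$ follows from regularity of $C$'' is circular: $X=c\circ\sigma\circ\theta$, so $X'=c'(\sigma)\,\sigma'\,\theta'$ vanishes precisely when $\theta'$ does, regularity of $c$ and $\sigma'\ne0$ notwithstanding. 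Consequently your contradiction is not established, and the non-degeneracies you need ($\partial_\theta F\ne0$, i.e.\ the contact point is not the vertex; $\partial_{t_2}G\ne0$, i.e.\ the side is not tangent to $K$ at $P_2$; and $f'\ne0$) remain unproven. For what it is worth, the paper does not prove them either: after a projective normalization it simply asserts that the Jacobian minor $(\xi-x)h''(\xi)(h'(\xi)-g'(\mu))$ is nonzero, which encodes exactly the same three transversality conditions. So either state these as standing non-degeneracy hypotheses (as the paper implicitly does), or find a correct argument; the one you sketch does not close.
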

\begin{proof}
By applying a projective map if necessary, we may assume the situation depicted in Figure~\ref{fig-diffeo}.
\begin{figure}[H]
\begin{center}
\begin{tikzpicture}[line cap=round,line join=round,x=33,y=33]
\draw[line width=1.pt,smooth,samples=100,domain=-3.5:4] plot(\x,{(-(\x)^(2))/8+2});
\draw [samples=150,line width=1.pt,domain=-2.4:3.4)] plot (\x,{1-(\x-.5)^2/6});
\draw [line width=0.5pt] (3.287459472959304,0.649076276706267)-- (3.287459472959304,-0.92);
\draw [line width=0.5pt] (0.9074619995093508,0.972329119825974)-- (0.9074619995093508,-0.92);
\draw [line width=0.5pt] (-2.200894140934368,1.3945081225500964)-- (-2.200894140934368,-0.92);
\draw [line width=0.5pt] (3.287459472959304,0.649076276706267)-- (-2.200894140934368,1.3945081225500964);
\draw [line width=0.5pt,->] (-3.5,-0.92)-- (4,-0.92);
\begin{footnotesize}
\draw[color=black] (-3.24,0.41) node {$K$};
\draw[color=black] (-1.4,.2) node {$C$};
\draw [fill=white] (3.287459472959304,0.649076276706267) circle (1.5pt) node[above,xshift=14pt] {$(x,g(x))$};
\draw [fill=white] (0.9074619995093508,0.972329119825974) circle (1.5pt) node[above] {$(\xi,h(\xi))$};
\draw [fill=white] (-2.200894140934368,1.3945081225500964) circle (1.5pt) node[above,xshift=-11pt] {$(\mu,g(\mu))$};
\draw [fill=white] (-2.200894140934368,-0.92) circle (1.5pt) node[below] {$\mu$};
\draw [fill=white] (3.287459472959304,-0.92) circle (1.5pt) node[below] {$x$};
\draw [fill=white] (0.9074619995093508,-0.92) circle (1.5pt) node[below] {$\xi$};
\end{footnotesize}
\end{tikzpicture}
\caption{Construction of the diffeomorphism $f: S^1\to S^1$.}\label{fig-diffeo}
\end{center}
\end{figure}
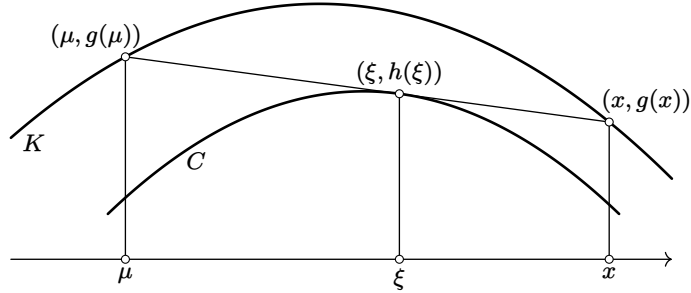

Here, the vertex curve $K$ is locally given as the graph of a $C^2$ function $g$, and the envelope $C$
as graph of a $C^2$ function $h$. We are interested in the regularity of the map $x\mapsto\mu$,
as defined in Figure~\ref{fig-diffeo}. We have
\begin{eqnarray*}
g(x)-h(\xi) &=& h'(\xi)(x-\xi)\\
h(\xi)-g(\mu) &=& h'(\xi)(\xi-\mu).
\end{eqnarray*}
Consider the $C^1$ map 
$$
F:\R^{1+2}\to\R^2, \quad(x,\xi,\mu)\mapsto\begin{pmatrix}
g(x)-h(\xi) - h'(\xi)(x-\xi)\\
h(\xi)-g(\mu)- h'(\xi)(\xi-\mu)
\end{pmatrix}
$$
The Jacobian matrix of $F$ is given by
$$
DF(x,\xi,\mu) = \begin{pmatrix}
g'(x)-h'(\xi) & (\xi-x)h''(\xi) & 0\\
0 & (\mu-\xi)h''(\xi) & h'(\xi) - g'(\mu).
\end{pmatrix}
$$
In particular, we have
$$
\det\begin{pmatrix}
 (\xi-x)h''(\xi) & 0\\
 (\mu-\xi)h''(\xi) & h'(\xi) - g'(\mu) 
\end{pmatrix}= (\xi-x)h''(\xi)(h'(\xi)-g'(\mu)) \neq 0
$$
and hence the implicit function theorem guarantees the local existence of 
a $C^1$ function $x\mapsto (\xi(x),\mu(x))$ such that $F(x,\xi(x),\mu(x))=0$, locally.
However, for the partial function $\mu(x)$ we get a better regularity, namely $C^2$, since
\begin{eqnarray*}
\begin{pmatrix}
\xi'(x)\\ \mu'(x)
\end{pmatrix} &=& 
-\begin{pmatrix}
(\xi(x)-x)h''(\xi(x)) & 0\\
 (\mu(x)-\xi(x))h''(\xi(x)) & h'(\xi(x)) - g'(\mu(x)) 
\end{pmatrix} ^{-1}
\begin{pmatrix}
g'(x)-h'(\xi(x))\\0
\end{pmatrix} \\
&=&\begin{pmatrix}
\frac{g'(x) - h'(\xi(x))}{(x-\xi(x))h''(\xi(x))}\\[2mm]
\frac{\mu(x)-\xi(x))(g'(x)-h'(\xi(x)))}{(x-\xi(x))(g'(\mu(x)) - h'(\xi(x)))}
\end{pmatrix}.
\end{eqnarray*}
The conversion to the parameter $t$ of the parametrization $\gamma:S^1\to\R^2$ does not
change this regularity. Indeed, if $\gamma(t_1)=(x,g(x))$ and $\gamma(t_2)=(\mu,g(\mu))$ 
we have $t_2=\gamma_1^{-1}(\mu(\gamma_1(t_1)))$.

With this regularity established the properties of the function $f$ follow from the properties of the Poncelet polygon in $(K,C)$.
\end{proof}

\begin{remark}
Notice that according to Proposition~\ref{pro-reg}, Theorem~\ref{poncelet-pair-no-self-intersection} describes the general case of a 
Poncelet pair $(K,C)$ if $K$ is a $C^2$ curve, and $C$ a $C^2$ curve with non-vanishing curvature.
Indeed, let $(K,C)$ be such a pair, where $K$ is given by a positively oriented parametrization,
then, by Proposition~\ref{pro-reg}, this defines an orientation preserving  $C^2$ diffeomorphism $f:S^1\to S^1$ with $f^n=\operatorname{id}_{S^1}$, i.e., a torsion map,
such that the curve  given by the parametrization~(\ref{envelopeC}) is the original envelope $C$.
\end{remark}

In Section~\ref{sec-ep} we have constructed equiangular Poncelet pairs for a given envelope $C$.
Now, we want to consider equiangular pairs, if the vertex curve $K$  is given.
Let $m,n\in\mathbb N$ be coprime, $n\geq 2$ and consider again a vertex curve $K$ with positive curvature given by the support function $p(\varphi) = a+\cos(\ell\varphi)$, where $\ell = \frac{n}{m}$. The condition for positive curvature is given by $a>\ell^2-1$ if $\ell>1$ and $a>1-\ell^2$ if $0<\ell<1$.
In this case $K$ can be parametrized by
\begin{equation}\label{eq-KKK}
K: \left[0,2\pi m\right)\to\R^2, \varphi\mapsto Y(\varphi) = p(\varphi)u(\varphi)+p'(\varphi)u'(\varphi).
\end{equation}

If $\alpha = 2\pi/\ell$, then the function $f(\varphi) = \varphi + \alpha \mod 2\pi$ is a smooth torsion map as $f^n = \mathrm{id}_{S^1}$ and $f^i\ne\mathrm{id}_{S^1}$ for $0<i<n$. 

The envelope of the segments of the Poncelet polygon $P$ with vertices
$$
Y(t+j\alpha), j=0,\ldots,n-1
$$
is given by
\begin{equation}\label{eq-CCC}
C:\left[0,2\pi m\right)\to\R^2, \varphi \mapsto X(\varphi) = Y(\varphi) +s(\varphi)(Y(\varphi+\alpha)-Y(\varphi)),\end{equation}
where
$$
s(\varphi) = \frac12\left(1+\cot^2\left(\frac\pi \ell\right)\frac{p'(\varphi)}{p(\varphi)}\right).
$$
Note that $s$ is well-defined, provided $a>1$ and in this case, the pair $(K,C)$ given by~\eqref{eq-KKK} and~\eqref{eq-CCC} is an equiangular Poncelet pair with angle $\alpha$.

Note that the polygon collapses to a multiplicity 2 segment if $n=2$ and in this case, the curve $C$ degenerates to a single point since $X(\varphi)=(0,0)$. We will therefore now assume $n\geq 3$:
If $(1 + a - 2 \ell^2)^2 > 0$, then the curvature of $X$ is positive and $X$ is therefore a regular curve provided $a>\max\{2\ell^2-1,1\}$.

If $\ell\in\N$ (and hence $\ell\geq 3$), $P$ is a regular $\ell$-gon and if $\ell$ is odd, the curve $Y$ is the boundary of a body of constant width as
$$
p(\varphi+\pi)+p(\varphi) \equiv 2a.
$$

Summarizing we have:
\begin{prop}\label{innercurves-iterated}
Let $m,n\in\mathbb N$ be coprime, $n\geq 3$,
$\ell=\frac{n}{m}$, $\alpha = \frac{2\pi}{\ell}$ and $p(\varphi) = a + \cos(\ell\varphi)$, where $a>\max\{\ell^2-1,1\}$. Then the Poncelet polygon $P$ for the Poncelet pair $(K,C)$ given by~\eqref{eq-KKK} and~\eqref{eq-CCC} is equiangular with angle $\alpha$ and $K$ is a curve with positive curvature. We furthermore have:
\begin{enumerate}\item If $a>\max\{2\ell^2-1,1\}$, the curvature of $C$ is positive.
\item If $\ell\in\N$, then $P$ is a regular $\ell$-gon and the condition for positive curvature of $C$ reduces to $a>2\ell^2-1$.
\item If $\ell\in\N$ is odd, then $K$ is the boundary of a body with constant width.
\end{enumerate}
\end{prop}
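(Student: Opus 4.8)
The plan is to assemble the statement from three computations --- one for the vertex curve $K$, one for the torsion map and the envelope $C$, and one (the substantial one) for the curvature of $C$ --- reusing Theorem~\ref{poncelet-pair-no-self-intersection} and Proposition~\ref{regularityofX}.

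I would first settle everything concerning $K$. The curve~\eqref{eq-KKK} is the support-function curve of $p(\varphi)=a+\cos(\ell\varphi)$ on $[0,2\pi m)$ (one full period, since $\gcd(m,n)=1$), so its radius of curvature is $\rho(\varphi)=p(\varphi)+p''(\varphi)=a+(1-\ell^2)\cos(\ell\varphi)$, with minimum $a-|1-\ell^2|$ over $\varphi$; since $\max\{\ell^2-1,1\}\geq|1-\ell^2|$, the hypothesis $a>\max\{\ell^2-1,1\}$ forces $\rho>0$, hence $K$ has positive curvature, and in particular $a>1$. As $\ell=n/m$ is reduced and $\ell\alpha=2\pi$, the map $f(\varphi)=\varphi+\alpha$ satisfies $f^n=\mathrm{id}$ and $f^i\ne\mathrm{id}$ for $0<i<n$, i.e.\ it is a torsion map. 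Substituting $Y=p\,u+p'\,u'$ into \eqref{definition-s}--\eqref{envelopeC}, using $Y'=\rho\,u'$, the identities $u(\varphi+\alpha)-u(\varphi)=2\sin(\pi/\ell)\,u'(\varphi+\pi/\ell)$ and $u'(\varphi+\alpha)-u'(\varphi)=-2\sin(\pi/\ell)\,u(\varphi+\pi/\ell)$, and the periodicities $p(\varphi+\alpha)=p(\varphi)$, $p'(\varphi+\alpha)=p'(\varphi)$ (which hold because $\ell\alpha=2\pi$), one obtains $\Delta:=Y\circ f-Y=2\sin(\pi/\ell)\bigl(p\,u'(\cdot+\pi/\ell)-p'\,u(\cdot+\pi/\ell)\bigr)$ and $\Delta'=-2\sin(\pi/\ell)\,\rho\,u(\cdot+\pi/\ell)$, whence $\langle\Delta',J\Delta\rangle=4\sin^2(\pi/\ell)\,\rho\,p>0$ (for $n\geq3$ one has $\sin(\pi/\ell)\ne0$, and $\rho,p>0$). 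So \eqref{envelopeC} is well defined and coincides with \eqref{eq-CCC}, and Theorem~\ref{poncelet-pair-no-self-intersection} (stated there for $\R/(2\pi\Z)$ but valid for the period $2\pi m$ as well) shows that $(K,C)$ is a Poncelet pair.

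Equiangularity follows from a rotational symmetry forced by the special shape of $p$. Because $p(\varphi+\alpha)=p(\varphi)$ and $p'(\varphi+\alpha)=p'(\varphi)$, the Euclidean rotation $R_\alpha$ about the angle $\alpha$ sends $Y(\varphi)$ to $Y(\varphi+\alpha)$ for every $\varphi$ --- rotating a support-function curve by $\alpha$ merely shifts its normal-angle parameter by $\alpha$, and here the shifted support function is again $p$. Hence $R_\alpha$ carries each edge $[Y(\varphi+j\alpha),Y(\varphi+(j+1)\alpha)]$ of the Poncelet polygon $P$ onto the next one, so the direction of the edges turns by exactly $\alpha$ at each vertex: $P$ is equiangular with external angle $\alpha$, independently of the starting parameter. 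If $\ell\in\N$ then $m=1$, so $P$ has $n=\ell$ vertices equally spaced in normal angle and invariant under $R_{2\pi/\ell}$, i.e.\ a regular $\ell$-gon, while $\max\{2\ell^2-1,1\}=2\ell^2-1$ for $\ell\geq3$; this gives item~(2). If in addition $\ell$ is odd, then $\cos(\ell(\varphi+\pi))=-\cos(\ell\varphi)$, so $p(\varphi)+p(\varphi+\pi)\equiv2a$, which, $K$ being an embedded convex curve, says precisely that $K$ has constant width $2a$; this is item~(3).

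The core of the argument --- and the step I expect to be the hardest --- is item~(1). Differentiating $X=Y+s\Delta$ once more, using the formulas for $\Delta,\Delta'$ above and $s'=\tfrac12\cot(\pi/\ell)(p''p-(p')^2)/p^2$, a direct computation should collapse $X'$ to $X'(\varphi)=\tfrac12\cot(\pi/\ell)\,p(\varphi)^{-2}\,G(\varphi)\,\Delta(\varphi)$ with $G:=p^2+2p''p-(p')^2$; since $\cos(\pi/\ell)\ne0$ (as $n\geq3$) and $p,\Delta$ are nowhere zero, $X$ is regular exactly when $G$ never vanishes. Substituting $p=a+\cos(\ell\varphi)$ turns $G$ into the quadratic $w\mapsto(1-\ell^2)w^2+2a(1-\ell^2)w+(a^2-\ell^2)$ in $w=\cos(\ell\varphi)\in[-1,1]$, whose vertex lies at $w=-a<-1$ and whose endpoint values are $(a-1)(a+2\ell^2-1)>0$ at $w=-1$ and $(a+1)\bigl(a-(2\ell^2-1)\bigr)$ at $w=1$; by monotonicity on $[-1,1]$ this gives $G>0$ throughout if and only if $a>2\ell^2-1$, and since $a>1$ anyway, this is exactly the condition $a>\max\{2\ell^2-1,1\}$ of~(1). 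To strengthen regularity of $X$ to positivity of the curvature of $C$, I would observe that the outward normal of $C$ has angle $\psi(\varphi)=\varphi+\pi/\ell+\arctan\!\bigl(p'(\varphi)/p(\varphi)\bigr)$ with $\psi'=\rho p/(p^2+(p')^2)>0$, so writing $X'=\lambda\,u'(\psi)$ one finds $\langle X'',JX'\rangle=\lambda^2\psi'>0$ wherever $\lambda\ne0$; hence on the range $a>\max\{2\ell^2-1,1\}$ the curvature of $C$ is everywhere positive. Alternatively, Proposition~\ref{regularityofX}(2) applies verbatim here since $Z=Y$ is smooth, and already yields the equivalence of regularity of $X$ with nowhere-vanishing curvature of $C$.
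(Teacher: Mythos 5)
Your proposal is correct and follows essentially the same route as the paper: Proposition~\ref{innercurves-iterated} there is simply a summary of the preceding computation with the support function $p=a+\cos(\ell\varphi)$, the torsion map $\varphi\mapsto\varphi+\alpha$ and the envelope formula, and your detailed verification of $X'=\tfrac12\cot(\pi/\ell)\,p^{-2}\bigl(p^2+2p''p-(p')^2\bigr)\Delta$ together with the endpoint values of the resulting quadratic in $\cos(\ell\varphi)$ fills in exactly what the paper leaves implicit. Incidentally, your first power of $\cot(\pi/\ell)$ in $s(\varphi)$ is the correct one --- it is what \eqref{definition-s} yields and what the paper's own subsequent remark on $s\in[0,1]$ uses --- so the $\cot^2$ in the paper's displayed formula for $s$ is a typo.
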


\begin{remark}
If $s(\varphi) \in[0,1]$, then the segments of $P$ rather than their prolongation touch $C$. This condition translates into
$\left|\cot\left(\frac\pi \ell\right)p'(\varphi)/p(\varphi)\right|\leq 1$. Since $|p'/p|\leq \frac{\ell}{\sqrt{a^2-1}}$, it is sufficient to require that
\begin{equation}\label{req-touch}
a\geq \sqrt{\cot^2\left(\frac{\pi}{\ell}\right)\ell^2+1}.
\end{equation}
If $\ell\in\mathbb N$, and the condition $a>2\ell^2-1$ for positive curvature of $C$ is satisfied, then~\eqref{req-touch} is automatically met since in this case
$2\ell^2-1>\sqrt{\cot^2\left(\frac{\pi}{\ell}\right)\ell^2+1}.$
\end{remark}

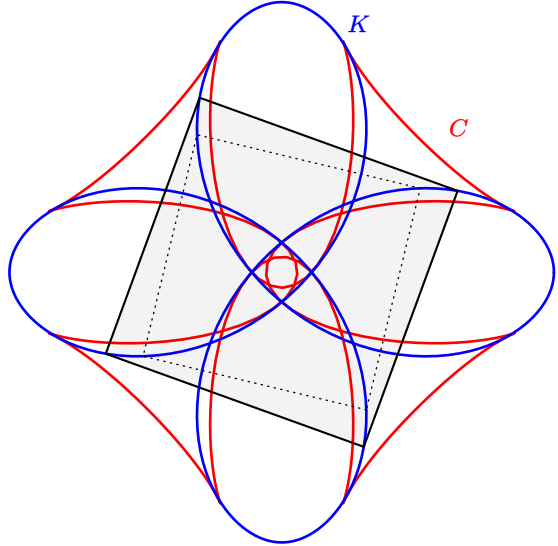
\begin{figure}[h]
\begin{center}
\begin{tikzpicture}[line cap=round,line join=round,x=7.8,y=7.8]
\draw [line width=1pt,domain=0:18.8496,samples=500,red] 
plot({6/(1.17778 + cos(4*(\x r)/3))*(1.37407*cos((\x r)/3) + 0.499136*cos((\x r)) + 0.680556*cos((5*(\x r))/3) - 0.196296*cos((7*(\x r))/3) + 
   0.0138889*cos((11*(\x r))/3) - 1.37407*sin((\x r)/3) + 0.499136*sin((\x r)) - 
   0.680556*sin((5*(\x r))/3) - 0.196296*sin((7*(\x r))/3) + 
   0.0138889*sin((11*(\x r))/3))},
{6/(1.17778 + cos((4*(\x r))/3))*(-1.37407*cos((\x r)/3) - 0.499136*cos(\x r) - 0.680556*cos((5*(\x r))/3) + 0.196296*cos((7*(\x r))/3) - 
   0.0138889*cos((11*(\x r))/3) - 1.37407*sin((\x r)/3) + 0.499136*sin(\x r) - 
   0.680556*sin((5*(\x r))/3) - 0.196296*sin((7*(\x r))/3) + 
   0.0138889*sin((11*(\x r))/3))}
   );

\draw [line width=1pt,domain=0:18.8496,samples=300,blue] plot(
{6*(1.16667*cos((\x r)/3) + 1.17778*cos(\x r) - 0.166667*cos((7* (\x r))/3))}, {6*(-1.16667*sin((\x r)/3) + 1.17778*sin(\x r) - 0.166667*sin((7* (\x r))/3))}
);

\tikzset{declare function={f1(\j)=6*(1.16667*(cos(((\x)/3+1.5708*\j) r) + 1.00952*cos((\x+4.71239*\j) r) - 0.142857*cos(((7* (\x))/3+10.9956*\j) r)));}};
\tikzset{declare function={f2(\j)=6*(-1.16667*(sin(((\x )/3+1.5708*\j) r) - 1.00952*sin((\x +4.71239*\j) r) + 0.142857*sin(((7* (\x ))/3+10.9956*\j) r)));}};
\def\x{1.4}
\draw [line width=.8pt,fill=black, fill opacity=.05]({f1(1)},{f2(1)}) -- ({f1(2)},{f2(2)})--({f1(3)},{f2(3)})--({f1(4)},{f2(4)})--cycle;
\def\x{1.6}
\draw[line width=.5pt,dotted] ({f1(1)},{f2(1)}) -- ({f1(2)},{f2(2)})--({f1(3)},{f2(3)})--({f1(4)},{f2(4)})--cycle;
\begin{footnotesize}
\draw[color=blue] (3.7,12) node {$K$};
\draw[color=red] (8.5,7) node {$C$};
\end{footnotesize}
\end{tikzpicture}
\caption{The curves $K$ and $C$ obtained in Proposition~\ref{innercurves-iterated} with $m=3$, $n=4$ and $a=\frac{53}{45}$.
Note that here the extensions of the sides of the Poncelet square touch the envelope $C$.}
\label{figure-no-touch}
\end{center}
\end{figure}
\subsection{Poncelet clans for given vertex curve \boldmath$K$}\label{sec-3.3}

Let again $K:S^1\to\R^2, \varphi \mapsto Y(\varphi)$, be a positively oriented regular $C^2$ curve with non-vanishing curvature. This time we consider orientation preserving diffeomorphisms $f_i:S^1\to S^1$, $1\leq i\leq n$, where $n\geq 3$ such that
$$
f_n=(f_{n-1}\circ \ldots \circ f_1)^{-1}.
$$
We will henceforth write $g_i = f_i\circ f_{i-1}\circ\ldots\circ f_1$ for $1\leq i\leq n$ and define $g_0=\mathrm{id}_{S^1}$.
For the following construction, we require that the Poncelet polygon with vertices $Y(g_i(\varphi)), i\in\{0,\ldots,n-1\}$, is not degenerate for all $\varphi\in S^1$.
By this we mean that the vertices are always  different from each other.
This condition can be reformulated as follows:
If two vertices of the polygon coincide, then we must have $g_j(\varphi_0)=g_i(\varphi_0)$ for some $\varphi_0\in S^1$ and $i\ne j$. We may assume without loss of generality that $i<j$ so that $g_i(\varphi_0)$ must be a fixed point of $g_j\circ g_i^{-1}$. Therefore we will require $g_j\circ g_i^{-1}$ to not have fixed points for all $1\leq i<j\leq n$.
The curve  $C_i:S^1\to\R^2$ is then given by the envelope of the segments
$(1-s)Y(g_{i-1}(\varphi))+sY(g_i(\varphi))$, where $i$ is taken mod $n$. According to \eqref{envelopeC}, $C_i$ will be parametrized by
$$
X_i = Y\circ g_{i-1}-\frac{\langle(Y\circ g_{i-1})',J(Y\circ g_i-Y\circ g_{i-1})\rangle}{\langle(Y\circ g_i-Y\circ g_{i-1})',J(Y\circ g_i-Y\circ g_{i-1})\rangle}(Y\circ g_i-Y\circ g_{i-1}),
$$
and $(K,C_1,\ldots,C_n)$ is a Poncelet clan.

\subsection{Poncelet pairs for given envelope \boldmath$C$}\label{sec-3.4}
Here, we use the same approach as in Section~\ref{sec-2}. Let the envelope $C$ be given by~(\ref{eq-C}).
Consider an orientation preserving diffeomorphism $f \in C^2(S^1_k,S^1_k)$, where $S^1_k = \R/2k\pi \Z$ such that $f^0 =f^n =
\operatorname{id}_{S^1_k}$, but $f^i\neq \operatorname{id}_{S^1_k}$ for $0<i<n$ with $n>2$. 
Then consider $X(\varphi)$ and $X(f(\varphi))$ as points of contact of two consecutive sides of a Poncelet polygon with
the given curve $C$. The geometric situation is as in Figure~\ref{fig-2}, but  with $f(\varphi)$ in place of $\varphi+\alpha$.
Then the point $Y(\varphi) = p(\varphi)u(\varphi) + q(\varphi)u'(\varphi)$ on the vertex curve $K$ satisfies
\begin{equation}\label{eq-euleralg}
p(\varphi) u(\varphi) + 
 q(\varphi) u'(\varphi) =p(f(\varphi)) u(f(\varphi)) + 
   s(\varphi) u'(f(\varphi)).
\end{equation}
As in Section~\ref{sec-2} we find
$$
q(\varphi)= \frac{p(f(\varphi))-p(\varphi)\langle u(\varphi),u(f(\varphi))\rangle}{\langle u'(\varphi),u(f(\varphi))\rangle}.
$$
Hence we have the following result.
\begin{theorem}\label{thm18}
Let $C$ be a closed $C^2$ curve in the Euclidean plane with non-vanishing curvature, given by~(\ref{eq-C}),
and $f \in C^2(S^1_k,S^1_k)$ an orientation preserving diffeomorphism  such that $f^0 =f^n =
\operatorname{id}_{S^1_k}$, but $f^i\neq \operatorname{id}_{S^1_k}$ for $0<i<n$ with $n>2$. 
Suppose that $\langle u'(\varphi),u(f(\varphi))\rangle\neq 0$ for all $\varphi$. Then $(K,C)$
is a Poncelet pair for the vertex curve 
$$K:[0,2k\pi)\to\mathbb R^2, \quad \varphi\mapsto Y(\varphi)=p(\varphi)u(\varphi)+\frac{p(f(\varphi))-p(\varphi)\langle u(\varphi),u(f(\varphi))\rangle}{\langle u'(\varphi),u(f(\varphi))\rangle} u'(\varphi).
$$
\end{theorem}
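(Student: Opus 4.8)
The plan is to recognize $Y(\varphi)$ as the intersection of two consecutive tangent lines of $C$ and then to check that the $f$-orbit of a parameter produces a closed chain of such tangents whose consecutive intersection points are the vertices of a polygon circumscribed about $C$.

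First I would invoke the tangent--line dictionary supplied by the support function. Since $C$ is parametrized by~\eqref{eq-C}, the tangent to $C$ at $X(\varphi)$ is the line $\ell_\varphi:=\{y\in\R^2:\langle y,u(\varphi)\rangle=p(\varphi)\}$, with unit normal $u(\varphi)$ and signed distance $p(\varphi)$ from the origin; so the tangents of $C$ are precisely the lines $\ell_\varphi$, $\varphi\in[0,2k\pi)$. Two such lines $\ell_\varphi$ and $\ell_{f(\varphi)}$ are transverse exactly when $u(\varphi),u(f(\varphi))$ are linearly independent, i.e.\ when $\langle u'(\varphi),u(f(\varphi))\rangle\ne 0$, which is the standing hypothesis; hence $\ell_\varphi\cap\ell_{f(\varphi)}$ is a single point depending $C^2$-smoothly on $\varphi$ (numerator and denominator of $q$ are $C^2$, the denominator nowhere zero). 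Using $\langle u(\varphi),u(\varphi)\rangle=1$ and $\langle u'(\varphi),u(\varphi)\rangle=0$, a one-line computation gives $\langle Y(\varphi),u(\varphi)\rangle=p(\varphi)$ and $\langle Y(\varphi),u(f(\varphi))\rangle=p(f(\varphi))$, the latter being exactly the defining relation for $q(\varphi)$ with its denominator cleared. Therefore $Y(\varphi)=\ell_\varphi\cap\ell_{f(\varphi)}$. This is dual to the construction of Section~\ref{sec-3.2}: there the vertex curve was prescribed and the envelope arose by intersecting consecutive chords; here the envelope is prescribed through its tangents and the vertex curve arises by intersecting consecutive tangents.

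Next I would fix an arbitrary $\varphi_0\in[0,2k\pi)$ and form the polygon $P$ with vertices $V_i:=Y(f^i(\varphi_0))$, $i=0,\ldots,n-1$. Since $f^n=\operatorname{id}_{S^1_k}$ we have $V_n=V_0$, so $P$ closes. By the previous step $V_i\in\ell_{f^i(\varphi_0)}\cap\ell_{f^{i+1}(\varphi_0)}$ and $V_{i+1}\in\ell_{f^{i+1}(\varphi_0)}\cap\ell_{f^{i+2}(\varphi_0)}$, so both endpoints of the edge $V_iV_{i+1}$ lie on the single tangent line $\ell_{f^{i+1}(\varphi_0)}$ of $C$; hence each edge lies on a tangent of $C$ and $P$ is circumscribed about $C$ in the sense of Definition~\ref{def-pairs}, the contact point lying inside the edge or on its prolongation according as the value of $s$ in~\eqref{eq-euleralg} lies in $[0,1]$ or not. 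Since $f$ is a torsion map, Proposition~\ref{rotationnumberprop}, applied after the rescaling $S^1_k\cong S^1$, shows $f$ is conjugate to a rotation by a reduced fraction with denominator $n$, so $f^i$ has no fixed point for $0<i<n$; hence the parameters $f^0(\varphi_0),\ldots,f^{n-1}(\varphi_0)$ are pairwise distinct and $P$ is a genuine non-degenerate $n$-gon inscribed in $K$. As $V_0=Y(\varphi_0)$ ranges over all of $K$ and is a vertex of $P$, the pair $(K,C)$ is a Poncelet pair.

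No single step is difficult; the points that need a little care are the index bookkeeping placing the edge $V_iV_{i+1}$ on $\ell_{f^{i+1}(\varphi_0)}$ rather than on $\ell_{f^i(\varphi_0)}$, and the observation that the hypothesis $\langle u'(\varphi),u(f(\varphi))\rangle\ne 0$ is precisely what makes $Y$ well defined and simultaneously forces consecutive tangents of $C$ to intersect.
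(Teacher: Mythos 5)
Your proposal is correct and follows essentially the same route as the paper: the formula for $q(\varphi)$ is obtained exactly by imposing that $Y(\varphi)$ lies on the tangents of $C$ at $X(\varphi)$ and $X(f(\varphi))$ (the paper does this by pairing equation~\eqref{eq-euleralg} with $u(f(\varphi))$, which is your identity $\langle Y(\varphi),u(f(\varphi))\rangle=p(f(\varphi))$), and closure follows from $f^n=\operatorname{id}_{S^1_k}$. Your added details on edge bookkeeping and non-degeneracy via Proposition~\ref{rotationnumberprop} are consistent with, and slightly more explicit than, the paper's argument.
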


\subsection{Poncelet clans for given envelope \boldmath$C$}\label{sec-3.5}

Let $C:S^1_k\to \R^2$ be a positively oriented regular $C^2$ curve with non-vanishing curvature given by a support function $p$ as $X(\varphi) = p(\varphi)u(\varphi)+p'(\varphi)u'(\varphi)$ and consider nontrivial orientation preserving $C^2$ diffeomorphisms $f_i:S^1_k\to S^1_k$, $1\leqslant i \leqslant n$, where $n\geqslant 3$ such that
$$
f_n = (f_{n-1}\circ \ldots \circ f_1)^{-1}
$$
and we will write again $g_i = f_i\circ f_{i-1}\circ \ldots \circ f_1$ for $1\leqslant i \leqslant n$ and define $g_0=\mathrm{id}_{S^1_k}$. Imitating the construction given in Theorem~\ref{thm18}, if $\langle u'(g_{i-1}(\varphi)),u(g_{i}(\varphi)\rangle \ne 0$ for all $\varphi$ and $i\in\{1,\ldots,n\}$, we obtain for all $i\in\{1,\ldots,n\}$ a $C^2$ vertex curve $K_i:S^1_k\to\mathbb R^2$ defined by
$$
\varphi\mapsto Y_i(\varphi)=p(g_{i-1}(\varphi))u(g_{i-1}(\varphi))+\frac{p(g_i(\varphi))-p(g_{i-1}(\varphi))\langle u(g_{i-1}(\varphi)),u(g_i(\varphi))\rangle}{\langle u'(g_{i-1}(\varphi)),u(g_i(\varphi))\rangle} u'(g_{i-1}(\varphi)).
$$
The polygon $P(\varphi)$ with vertices $Y_1(\varphi),Y_2(\varphi),\ldots, Y_n(\varphi)$ is then a Poncelet polygon for each value of $\varphi$.

\subsection*{Statements and Declarations}
The authors report there are no competing interests to declare.

\subsection*{Author contributions}
Both authors contributed equally to the article.

\section*{Acknowledgement}
We would like to thank the referees for their valuable remarks which helped to improve this article.

\bibliographystyle{plain}

\end{document}